\newcommand{\arxiv}[1]{\href{http://arxiv.org/abs/#1}{\tt arXiv:\nolinkurl{#1}}}
\newcommand{\cX}{\mathcal{X}}
\newtheorem{theorem}{Theorem}[section]
\newtheorem{lemma}[theorem]{Lemma}
\newtheorem{proposition}[theorem]{Proposition}
\newtheorem*{theorem*}{Theorem}
\theoremstyle{remark}
\newtheorem{remark}[theorem]{Remark}
\newtheorem*{rem*}{Remark}
\newtheorem{claim}[theorem]{Claim}
\numberwithin{equation}{section}
\newcommand{\cz}{Calder\'{o}n--Zygmund\ }
\newcommand{\cF}{\mathcal{F}}
\newcommand{\cL}{\mathcal{L}}
\newcommand{\cG}{\mathcal{G}}
\newcommand{\cE}{\mathcal{E}}
\newcommand{\cA}{\mathcal{A}}
\newcommand{\cB}{\mathcal{B}}
\newcommand{\cD}{\mathcal{D}}
\newcommand{\1}{\mathbf{1}}
\newcommand{\fS}{\mathfrak{S}}
\newcommand{\bl}{\mathbf{1}}
\newcommand{\La}{\langle}
\newcommand{\Ra}{\rangle}
\newcommand{\gb}{\mathbf{\dot g}}
\newcommand{\wt}{\widetilde}
\newcommand{\Z}{\mathbb{Z}}
\newcommand{\N}{\mathbb{N}}
\newcommand{\R}{\mathbb{R}}
\newcommand{\E}{\mathbb{E}}
\newcommand{\child}{\operatorname{ch}}
\newcommand{\ch}{\operatorname{ch}}
\newcommand{\ci}[1]{_{ {}_{\scriptstyle #1}}}
\newcommand{\ut}[1]{^{\scriptstyle \text{\rm #1}}}
\renewcommand{\labelenumi}{(\roman{enumi})}
\newcounter{vremennyj}
\newcommand\cond[1]{\setcounter{vremennyj}{\theenumi}\setcounter{enumi}{#1}\labelenumi\setcounter{enumi}{\thevremennyj}}
\newcommand{\rk}{\operatorname{rk}}
\begin{document}
\title[Two weight \texorpdfstring{$L^p$}{L**p} estimates for paraproducts]{Two weight \texorpdfstring{$L^p$}{L**p} estimates for paraproducts in non-homogeneous settings}

\author{Jingguo Lai}

\address{Department of Mathematics \\ Brown University \\ Providence, RI 02912 \\ USA}
\email{jglai@math.brown.edu}
\author{Sergei Treil}
 \thanks{Supported  in part by the National Science Foundation under the grant DMS-1301579.}
\address{Department of Mathematics \\ Brown University \\ Providence, RI 02912 \\ USA}
\email{treil@math.brown.edu}


\subjclass[2010]{42B20, 42B25, 60G42}

\keywords{paraproducts, non-homogenous space, two weight estimates}

\begin{abstract}
We give a necessary and sufficient condition for the  two weight $L^p$-estimates for paraproducts in non-homogeneo\-us settings, $1<p<\infty$. We are mainly interested in the case $p\ne 2$, since the case $p=2$ is a well-known and easy corollary of the Carleson embedding theorem. The necessary and sufficient condition  is given in terms of testing conditions of Sawyer type:  for $p\le 2$ only one (``direct'') testing condition is required, but for $p>2$ both ``direct'' and ``adjoint'' testing conditions are needed. An interesting feature is that  the ``adjoint'' testing condition is that it is a testing condition not for the adjoint of the paraproduct, but for some auxiliary operator. 
\end{abstract}

\maketitle

\setcounter{tocdepth}{1}
\tableofcontents

\setcounter{tocdepth}{2}

\section{Introduction and main results}

The paper is devoted to the two weight estimates of paraproducts in the non-homogeneous martingale situation. Paraproducts play an important role in the investigation of the weighted inequalities for the singular integral operators. The $L^2$-boundedness of paraproducts is easy, a necessary and sufficient condition follows immediately from the Carleson embedding theorem. 

This necessary and sufficient condition can be stated as a \emph{testing condition}, i.e.~a dyadic paraproduct is bounded in $L^2$ if and only if there is a uniform estimate on all dyadic intervals (cubes). In the classical homogeneous (non-weighted) situation the $L^2$ boundedness (or $L^{p_0}$ boundedness, for some $1<p_0<\infty$) is equivalent to the boundedness of the paraproduct in \emph{all} $L^p$, $1<p<\infty$. 

The non-homogeneous situation is much more interesting, even in the non-weighted case. Namely, it was shown in \cite{T2} that in this case the testing condition is still is  necessary and sufficient, but it now depends on $p$: the boundedness in $L^{p_0}$ implies the boundedness in $L^p$ with $1<p\le p_0$ but not in $L^p$ with $p_0<p<\infty$.

Two weight case becomes even more interesting: while it is not hard to show that for $p\le 2$ the testing condition is still sufficient for the boundedness, we will present a counterexample showing that for $p>2$ the testing condition alone does not work. 

It will be shown in the paper that adding a second, ``dual'' testing condition we get necessary and sufficient conditions for the two weigh estimates for $p>2$. 

It has been understood for a long time, that in many cases one needs two testing conditions, one for the operator, the other one for the adjoint, to get a sufficient condition for the boundedness. That was the case, for example for many positive operators (i.e.~integral operators with positive kernels), in particular for the so-called \emph{positive dyadic operators}. That was also the case for  \cz operators, but in this case the bounds on paraproduct followed from one testing condition, and the dual testing condition implied bounds on the adjoint paraproduct. 

But in our case we need 2 testing condition for the estimates of paraproduct! Also, our ``dual'' testing condition is not a testing condition for the adjoint paraproduct, but for some auxiliary operator.

\subsection{Preliminaries}

The theory of $L^2$ two weight estimates is now well developed for many classes of singular operators, including the so-called ``well localized'' operators, treated in \cite{NTV2} where necessary and sufficient condition of Sawyer type was proves for such operators. 

The Sawyer type conditions are essentially the testing conditions on the operator and its adjoint, meaning that to prove that the operator is bounded, it is sufficient to check that the operator and its adjoint are (uniformly) bounded on characteristic functions $\1\ci Q$ of cubes $Q$. We say ``essentially'' here, because in many cases this testing condition can be weakened a bit, like restricting the result only to the cube $Q$, or in case of martingale operators also considering   smaller sums.

The recent achievements in the theory of $L^2$ two weight estimates include sufficiency of Sawyer type estimates for Hilbert Transform \cite{LSSU}, \cite{L}, for the Cauchy Transform \cite{LSSUW}, for the Riesz transforms \cite{LW}. 

As for the $L^p$ theory for $p\ne 2$ everything is well understood for the ``positive'' operators, including the maximal function \cite{Sawyer_2weight-max_1982}, fractional and Poisson integrals \cite{Sawyer_2weightFract-1988}, the so-called positive dyadic operators \cite{LSU}, \cite{T1}. 

As for the more ``singular'' operators we should first mention \cite{S}, where a necessary and sufficient condition for the two weight estimates of the vector-valued martingale operators was obtained. Such operators can be interpreted as a simplified ``model'' for paraproducts, and the necessary and sufficient conditions were ``kind of'' Sawyer type conditions. More precisely, the ``direct'' condition was exactly the Sawyer condition for the operator, but the adjoint condition was a more restrictive one, in a sense that for each interval $I$ one has to check the boundendness not on one function, but on an infinite family of (vector-valued) functions. 

Very interesting recent papers \cite{Vu1}, \cite{Vu2} give necessary and sufficient conditions for the two weight estimates of the well localized operators and of the Haar shifts, but again the conditions were more restrictive than the Sawyer type ones: again, for each interval $I$  one has to check the roundedness on infinitely many vector-valued functions. 

In this paper we consider paraproducts and the corresponding vector-valued martingale operators and get necessary and sufficient condition for the $L^p$ two weight estimates. Our conditions are exactly Sawyer type conditions, meaning that for each interval we have to test only one function $\1\ci I$. However, while the ``direct'' condition is exactly the testing condition for the original operator, the ``adjoint'' one tests an auxiliary operator, acting not between duals of the original  $L^p$ spaces, but between the duals of $L^{p/2}$ spaces.  We should mention that for $p\le2$  the ```direct'' testing conditions alone is sufficient; for $p>2$ both ``direct'' and ``adjoint'' conditions are required.

\subsection{The setup}

The general setup is a s follows. We consider a $\sigma$-finite measure space $(\cX, \fS, \mu)$ with a filtration (i.e.~with increasing sequences of $\sigma$-algebras) $\fS_n\subset \fS_{n+1}\subset \fS$, $n\in\Z$.

We assume that each $\sigma$-algebra $\fS_n$ is \emph{atomic}, i.e.~that there exist a countable  collection $\cL_n$ of disjoint sets of finite measure (atoms) such that each $A\in\fS_n$ is a union of sets $I\in\cL_n$. 

The fact that $\cL_n\subset \cL_{n+1}$ means that each $I\in\cL_{n}$ is at most countable union of $I'\in\cL_{n+1}$. 

We denote by $\cL=\bigcup_{n\in\Z} \cL_n$ the collection of all atoms (in all generations). 
We will also use the word \emph{intervals} for the atoms $I\in\cL$.  

The standard example will be the filtration given by a dyadic lattice in $\R^d$ but  with the underlying measure being an arbitrary Radon measure $\sigma$. 

We will allow a situation  when an atom $I$ belongs to several (even infinitely many) generations $\cL_n$. 
However, we will not allow $I$ to be in all generations, because in this case nothing interesting happens on the interval $I$. 

There are two (equivalent) ways to define paraproduct.  One is a ``probabilistic'' approach, when one uses discrete time $n$ (index in $\fS_n$) and considers standard in probability martingale differences; in this approach by an atom $I\in \cL_n$ we understand the pair $(I,n)$. The other one is a ``geometric'' approach, where one does not use the time $n$, and treats atoms $I$ just as sets. 

Let us start with the ``probabilistic'' approach. 
For $I\in\cL_n$ define its \emph{martingale} or \emph{time} children by
\[
\ch\ut t (I,n) := \{ I'\in \cL_{n+1}: I'\subset I\}
\]

For an interval $I\in\cL$ and a measure $\nu$ such that $\nu(I)<\infty$ define the averaging operator $\E\ci I^\nu$
\begin{align} \label{Av-01}
\mathbb{E}\ci{I}^\nu f=\langle f\rangle\ci{I,\nu}\mathbf{1}\ci{I}=\left(\nu(I)^{-1}\int_If d\nu\right)\mathbf{1}\ci{I};
\end{align}
we use the notation $\La f\Ra\ci{I,\nu}$ for the average
\[
\La f\Ra\ci{I,\nu} :=\nu(I)^{-1} \int_I fd\nu
\] 
with the understanding that $\La f\Ra\ci{I,\nu} =0$ if $\nu(I)=0$.

We  define the martingale difference $\Delta\ci{(I,n)}^\nu$ as 
\[
\Delta\ci{(I, n)}^\nu := \sum_{I'\in \ch(I,n)} \E\ci{I'}^\nu \quad -\quad \E\ci I^\nu
\]

Let $\mu$, $\nu$ be  locally finite measures (i.e.~finite on all $I\in\cL$) and let $b$ be $\nu$-integrable on each $I\in\cL$. 
Define the two weight  paraproduct $\pi=\pi_b = {\pi_{b}^{\mu, \nu}}$ by 
\begin{align}\label{para-01}
\pi_{b}^{\mu,\nu} f=\sum_{n\in\Z}\sum_{I\in\mathcal{L}_n}\left( \mathbb{E}\ci{I}^\mu f\right)\left(\Delta\ci{(I,n)}^\nu b\right).
\end{align}

Note that if an interval $I$ belongs to several $\cL_n$, all the term in the paraproduct corresponding to this interval, with a probable exception of the term with the largest $n$ are $0$. So we can write the paraproduct differently, not writing zero terms.

Namely, one can define the rank $\rk I$ of $I\in L$ as supremum of $n$ such that $I\in\cL_n$ (it can be equal $+\infty$). Then for $I\in\cL$, $\rk(I)=n$ define the children $\ch(I)$ to be the collection of atoms $I'\in\cL_{n+1}$ such that $I'\subset I$; if $\rk(I)=\infty$ we set $\ch(I):=\{I\}$. 

Then we can define the martingale difference $\Delta\ci I$ as 
\[
\Delta\ci{I}^\nu := \sum_{I'\in \ch(I)} \E\ci{I'}^\nu \quad -\quad \E\ci I^\nu
\]
and rewrite the paraproduct $\pi=\pi_b = {\pi_{b}^{\mu, \nu}}$ as 
\begin{align}\label{para-02}
\pi_{b}^{\mu,\nu} f=\sum_{n\in\Z}\sum_{I\in\mathcal{L}_n}\left( \mathbb{E}\ci{I}^\mu f\right)\left(\Delta\ci{I}^\nu b\right).
\end{align}
This is what we call the \emph{geometric} notation for paraproducts. 

\begin{remark}
All results and proofs in the paper are valid in both ``geometric'' and ``probabilistic'' settings. For the sake of brevity of writing we will use the ``geometric'' notation. However, we should emphasize that everything can be easily (formally) translated to the ``probabilistic'' setup: one just has to understand an atom $I\in\cL_n$ as a pair $(I, n)$, and inclusion $I\subset J$ for atoms $I\in\cL_n$ and $J\in \cL_k$ should be understood as inclusion for sets plus the inequality $n\ge k$.  The union or intersection of atoms in this case should be understood as union/intersection of the corresponding sets and the ``time'' $n$ should not be taken into account. 
\end{remark}

\begin{remark}
When we introduced paraproduct we assumed that we are given a locally integrable function $b$ (symbol of the paraproduct) whose martingale differences $\Delta^\nu\ci I b$ are involved in the definition. However, that is not necessary,  the results of the paper hold if we just assume that we are given a sequence of martingale differences $b\ci I =\Delta\ci I^\nu b$, without assuming that they correspond to a function $b$. 

We only assume that each martingale difference $b\ci I$ is an integrable function supported on $I$, constant on children of $I$ and $\nu$-orthogonal to constants, i.e.~that $\int_I b\ci I d\nu=0$. 

In this case we will still use $b$ for the symbol of the paraproduct, but we understand $b$ as the collection $b=\{b\ci I\}\ci{I\in\cL}$ of all martingale differences. But we will still use the symbol $\Delta\ci I^\nu b$ for $b\ci I$ to remind us about the origins. 
\end{remark}

\subsection{Main results}
In this paper we investigate when  $\pi_{b}^{\mu,\nu}$ is a bounded map from $L^p(\mu)$ to $L^p(\nu)$ for $1<p<\infty$. 

\subsubsection{Estimates for paraproducts}For $I\in \cL$ denote $\cL(I) :=\{I'\in\cL:I'\subset I\}$. 

\begin{theorem}
\label{thm1}
Let $1<p\le 2$. Then the estimate 
\begin{align}\label{eq4}
\|\pi_b^{\mu,\nu} f\|\ci{L^p(\nu)}^p\le A^p \|f\|\ci{L^p(\mu)}^p,
\end{align}
holds if and only if for all $J\in \cL$
\begin{align}\label{eq5}
\int_J\Biggl|\sum_{I\in\mathcal{L}(J)}\Delta\ci{I}^\nu b\Biggl|^{p}d\nu\le B^p\mu(J).
\end{align}
Moreover, for the best constants we have $c(p)B\le A\le C(p) B$, where the constants $c(p)$, $C(p) $ depends only on $p$. 
\end{theorem}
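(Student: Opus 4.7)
My plan is to prove necessity by testing with $f = \1\ci J$, and sufficiency by combining a Calder\'on--Zygmund stopping time on $f$ with the Burkholder--Gundy square function estimate; the hypothesis $p\le 2$ enters in a single decisive place, through the concavity $(\sum a_F)^{p/2} \le \sum a_F^{p/2}$. For necessity, using that atoms are nested or disjoint one has $\E\ci I^\mu \1\ci J = \1\ci I$ for $I\subseteq J$ and $\E\ci I^\mu \1\ci J = \tfrac{\mu(J)}{\mu(I)}\1\ci I$ for $I\supsetneq J$. Restricting $\pi_b \1\ci J$ to $J$, the first group of terms equals $S_J(b) := \sum_{I\in\cL(J)}\Delta\ci I^\nu b$, while the second group is a constant $c$ on $J$ (because each $\Delta\ci I^\nu b$ with $I\supsetneq J$ is constant on the child of $I$ containing $J$). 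Since $\int_J S_J(b)\,d\nu = 0$, H\"older gives $|c|\nu(J)^{1/p} \le \|\pi_b \1\ci J\|_{L^p(\nu)}$, and the triangle inequality yields $\|S_J(b)\|_{L^p(J,\nu)}\le 2 A \mu(J)^{1/p}$, which gives $B \le 2A$.

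For sufficiency I run a Calder\'on--Zygmund stopping time on $f$: build $\cF$ by taking, as stopping children of $F\in\cF$, the maximal $F'\subsetneq F$ with $\E\ci{F'}^\mu|f| > 2\E\ci F^\mu|f|$. Setting $E_F := F\setminus \bigcup \ch\ci{\cF}(F)$, the standard packing $\mu(F)\le 2\mu(E_F)$ combined with the dyadic maximal inequality yields the Carleson-type bound
\[
\sum_F (\E\ci F^\mu|f|)^p \mu(F) \lesssim \|f\|_{L^p(\mu)}^p,
\]
and for every $I$ with smallest stopping ancestor $[I]\ci{\cF} = F$ one has $|\E\ci I^\mu f|\le 2\E\ci F^\mu|f|$. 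Treating $\pi_b f$ as a $\nu$-martingale transform (the coefficients $\E\ci I^\mu f$ are $\fS_n$-measurable and each $\Delta\ci I^\nu b$ is the $\nu$-martingale difference at the next level), the Burkholder--Gundy square function estimate gives
\[
\|\pi_b f\|_{L^p(\nu)}^p \asymp \int \Bigl(\sum_I |\E\ci I^\mu f|^2 |\Delta\ci I^\nu b|^2\Bigr)^{p/2} d\nu.
\]

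Grouping $I$'s by their stopping ancestor $F$, pulling out $(2\E\ci F^\mu|f|)^2$, and using $p/2\le 1$ to distribute the outer power across the sum in $F$, I obtain
\[
\|\pi_b f\|_{L^p(\nu)}^p \lesssim \sum_F (\E\ci F^\mu|f|)^p \|g_F\|_{L^p(\nu)}^p,\qquad g_F := \Bigl(\sum_{I:\,[I]\ci{\cF}=F} |\Delta\ci I^\nu b|^2\Bigr)^{1/2}.
\]
A second application of Burkholder--Gundy gives $\|g_F\|_{L^p(\nu)} \asymp \|\sum_{I:\,[I]\ci{\cF}=F}\Delta\ci I^\nu b\|_{L^p(\nu)}$, and this last sum equals $S_F(b) - \sum_{F'\in\ch\ci{\cF}(F)} S_{F'}(b)$: it coincides with $S_F(b)$ on $E_F$, and on each stopping child $F'$ it is the constant $\E\ci{F'}^\nu b - \E\ci F^\nu b = \E\ci{F'}^\nu S_F(b)$ (telescoping the $\Delta\ci I^\nu b$ with $F'\subsetneq I\subseteq F$). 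A Jensen estimate on each $F'$ then yields $\|g_F\|_{L^p(\nu)}^p \lesssim \int_F |S_F(b)|^p d\nu \le B^p \mu(F)$, and combining with the Carleson embedding above produces $\|\pi_b f\|_{L^p(\nu)}^p \lesssim B^p \|f\|_{L^p(\mu)}^p$, so $A\le C(p) B$. I expect the concavity step $(\sum a_F)^{p/2}\le\sum a_F^{p/2}$ to be the main obstacle as one moves past $p=2$: this inequality reverses for $p>2$, which is precisely why (as the introduction notes) a single testing condition can no longer suffice in that range.
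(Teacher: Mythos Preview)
Your proof is correct and follows essentially the same strategy as the paper: a Calder\'on--Zygmund stopping time on $f$, the concavity step $(\sum a_F)^{p/2}\le \sum a_F^{p/2}$ for $p\le 2$, the testing condition on each stopping interval, and the Carleson embedding theorem to sum. The paper organizes this slightly differently: it first applies Littlewood--Paley (Theorem~\ref{thm2}) \emph{globally} to reduce Theorem~\ref{thm1} to its square-function analogue Theorem~\ref{t:SqPara-p<2} (a special case of Theorem~\ref{t:SqPara-p<q}), and simultaneously converts the testing condition \eqref{eq5} into the square-function testing condition \eqref{SqTest-01}; the stopping-time argument then runs entirely in the square-function world, so \eqref{SqTest-01} applies directly to bound each $\|F_J\|_{L^p(\nu)}$. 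You instead keep the original testing condition \eqref{eq5} and invoke Burkholder--Gundy a second time, locally on each stopping interval, together with the telescoping/Jensen argument to pass from $g_F$ back to $S_F(b)$. Both routes are equivalent; the paper's factorization is a bit cleaner (one Littlewood--Paley application rather than two, and no telescoping step), while your necessity argument, giving the explicit bound $B\le 2A$ without appealing to Littlewood--Paley constants, is a pleasant bonus.
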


\begin{theorem}
\label{t:para-p>2}
Let $2<p<\infty$. The estimate \eqref{eq4} 
holds if and only if \eqref{eq5} and 
\begin{align}
\label{eq6}
\int_{J}\left[\sum_{I\in\mathcal{L}(J)}
\frac{\nu{(I)}}{\mu{(I)}}\mathbb{E}\ci{I}^\nu\left( |\Delta\ci{I}^\nu b |^2\right)\right]^{r'}d\mu\le B_*^{r'} \nu(J), \qquad r=p/2
\end{align}
hold for all $J\in\cL$. 

Moreover, for the best constants we have  $c(p) \max\{B, B_* \}\le A \le C(p) \max\{B,B_*\}$. 
\end{theorem}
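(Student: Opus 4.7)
The plan is to reduce the $L^p(\mu)\to L^p(\nu)$ boundedness of $\pi_b$ to a two-weight $L^r(\mu)\to L^r(\nu)$ estimate (with $r=p/2>1$) for the \emph{positive dyadic operator}
\[
SF\;:=\;\sum_{I\in\cL}\langle F\rangle\ci{I,\mu}\,|\Delta\ci I^\nu b|^2\,\1\ci I,
\]
and then to invoke the Sawyer-type characterization of such operators from \cite{T1, LSU}. The bridge is Burkholder's two-sided martingale square function inequality applied to $\pi_b f=\sum_I\langle f\rangle\ci{I,\mu}\Delta\ci I^\nu b$ (a series of $\nu$-martingale differences with scalar coefficients), which gives
\[
\|\pi_b f\|_{L^p(\nu)}^2\;\asymp_p\;\Bigl\|\sum_{I\in\cL}|\langle f\rangle\ci{I,\mu}|^2\,|\Delta\ci I^\nu b|^2\Bigr\|_{L^r(\nu)}.
\]

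For \emph{sufficiency}, since $r>1$ I would dualize the right-hand $L^r(\nu)$ norm and apply Jensen's inequality $|\langle f\rangle\ci{I,\mu}|^2\le\langle f^2\rangle\ci{I,\mu}$ to dominate the resulting bilinear form by $\int g\,S(f^2)\,d\nu$ with $g\in L^{r'}(\nu)$, $g\ge 0$. Thus $\pi_b$ is bounded whenever $S\colon L^r(\mu)\to L^r(\nu)$ is, and the two Sawyer testing conditions for $S$, namely $\|S\1\ci J\|_{L^r(\nu)}^r\lesssim\mu(J)$ and $\|S^{*}\1\ci J\|_{L^{r'}(\mu)}^{r'}\lesssim\nu(J)$, translate into \eqref{eq5} (via one more use of Burkholder to exchange $\sum_{I\subset J}|\Delta\ci I^\nu b|^2$ with $|\sum_{I\subset J}\Delta\ci I^\nu b|^2$ inside the $L^r(\nu)$ norm) and \eqref{eq6} respectively, after a short calculation to absorb the ancestor contributions from $I\supsetneq J$ into the constants.

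For \emph{necessity}, \eqref{eq5} is immediate from testing $\pi_b$ on $f=\1\ci J$: one has $\pi_b\1\ci J|_J=\sum_{I\in\cL(J)}\Delta\ci I^\nu b$ up to a constant-on-$J$ ancestor tail, so the paraproduct estimate on $\1\ci J$ yields \eqref{eq5}. The main obstacle is \eqref{eq6}: the quadratic averages $|\langle f\rangle\ci{I,\mu}|^2$ natural to $\pi_b$ cannot be directly converted to the linear averages $\langle F\rangle\ci{I,\mu}$ appearing in \eqref{eq6}, since Jensen's inequality runs the wrong way. The plan is a two-stage argument. First, by testing the bilinear form $\langle\pi_b\1\ci E,\sum_{I\in\cL(E)}\Delta\ci I^\nu b\rangle_\nu$ and combining the paraproduct bound with \eqref{eq5} and a H\"older step in the martingale norms, extract the \emph{subset Carleson estimate}
\[
\sum_{I\in\cL(E)}\int_I|\Delta\ci I^\nu b|^2\,d\nu\;\lesssim\;\mu(E)^{1/r}\nu(E)^{1/r'}\qquad\text{for every }E\in\cL.
\]
Second, upgrade this to \eqref{eq6} by dualizing in $L^{r'}(\mu)$ and running a Calder\'on--Zygmund / principal-cubes decomposition of the dual test function $F\in L^r(\mu)$ supported on $J$: on each stopping region $\langle F\rangle\ci{I,\mu}$ is comparable to the head value $\langle F\rangle\ci{P,\mu}$, so the subset Carleson bound reduces matters to controlling $\sum_P\langle F\rangle\ci{P,\mu}\,\mu(P)^{1/r}\nu(P)^{1/r'}$ by $\nu(J)^{1/r'}\|F\|_{L^r(\mu)}$. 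This upgrade is the technical heart of the argument, and combining the $\mu$-sparsity of principal cubes with the mixed $\mu$--$\nu$ weight in the subset Carleson estimate is where the best-constant equivalence $A\asymp_p\max\{B,B_{*}\}$ must be tracked with precision.
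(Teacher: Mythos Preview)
Your sufficiency argument follows the paper's route in spirit, but there is a technical oversight: the operator
\[
SF=\sum_{I\in\cL}\langle F\rangle\ci{I,\mu}\,|\Delta_I^\nu b|^2
\]
is \emph{not} a positive dyadic operator in the sense of \cite{T1,LSU}, because $|\Delta_I^\nu b|^2$ is only constant on the \emph{children} of $I$, not on $I$ itself. The results of \cite{T1,LSU} do not apply verbatim; the paper has to prove a separate two-weight theorem for such ``shifted'' positive operators (Theorem \ref{thm5}), which requires a modified stopping-time construction where the stopping criterion is checked on parents $\widehat I$ rather than on $I$. This is fixable, but it is not the ``short calculation'' you suggest.

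The real problem is in your necessity argument for \eqref{eq6}. Your subset Carleson estimate
\[
\sum_{I\in\cL(E)}\int_I|\Delta_I^\nu b|^2\,d\nu\ \lesssim\ \mu(E)^{1/r}\nu(E)^{1/r'}
\]
is in fact a consequence of \eqref{eq5} \emph{alone}: apply H\"older on $E$ with exponents $r=p/2$ and $r'$ to $\int_E\sum_{I\subset E}|\Delta_I^\nu b|^2\,d\nu$, then use Littlewood--Paley and \eqref{eq5}. The paraproduct bound \eqref{eq4} is not actually used (your proposed bilinear testing of $\langle\pi_b\1\ci E,\sum_{I\subset E}\Delta_I^\nu b\rangle_\nu$ requires an $L^{p'}(\nu)$ bound on $\sum_{I\subset E}\Delta_I^\nu b$, which you do not have). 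But the counterexample in Section \ref{s:cEx} shows that \eqref{eq5} does \emph{not} imply \eqref{eq4}; since \eqref{eq5} together with \eqref{eq6} \emph{does} imply \eqref{eq4} (by sufficiency), it follows that \eqref{eq5} cannot imply \eqref{eq6}. Hence no ``upgrade'' from your subset Carleson estimate to \eqref{eq6} can exist, and in particular the principal-cubes argument you sketch must fail. Concretely, the principal cubes are $\mu$-sparse, but the bound you need involves $\nu(J)^{1/r'}$; the two measures are decoupled and there is no mechanism to pass from one to the other.

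The paper handles necessity by a completely different and much cleaner device: the Rubio de Francia iteration. Given $f\ge 0$ in $L^r(\mu)$, set $\tilde f:=R_{\mu,r}f$ and $g:=\tilde f^{1/2}$. Then $\tilde f\ge f$, $\|\tilde f\|\ci{L^r(\mu)}\le 2\|f\|\ci{L^r(\mu)}$, and crucially $\tilde f$ satisfies the $A_1$-type reverse inequality $\langle\tilde f\rangle\ci{I,\mu}\le 2r'\inf_I\tilde f\le 2r'\langle g\rangle\ci{I,\mu}^2$. This reverse H\"older is exactly what converts linear averages $\langle F\rangle\ci{I,\mu}$ back into squared averages $\langle g\rangle\ci{I,\mu}^2$, yielding
\[
\|SF\|\ci{L^r(\nu)}\le 2r'\,\|\Pi_\beta g\|\ci{\gb_p^2(\nu)}^2\le 4r'\,\|\Pi_\beta\|^2\,\|F\|\ci{L^r(\mu)},
\]
so boundedness of $\pi_b$ forces boundedness of $S$, and \eqref{eq6} is then just the dual testing condition for $S$. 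This is the missing idea.
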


\begin{remark}
The one weight case $\nu=\mu$ was treated in \cite{T2}, where it was shown that \eqref{eq4} holds if and only if \eqref{eq5} holds for all $1<p<\infty$. However, in the two weight case, as we will see below in Section \ref{s:cEx}, the condition \eqref{eq5} alone is not sufficient if  $2<p<\infty$. 
\end{remark}

\subsubsection{Littlewood--Paley inequalities and estimates of generalized vector paraproducts}

For a function $f\in L^p(\nu)$, $1\le p \le\infty$ define its \emph{square function}
\begin{align}
\label{SqFn}
S^\nu f := \left( \sum_{I\in\cL} |\Delta\ci I^\nu f|^2 \right)^{1/2}. 
\end{align}

We will need the following well-known result. 
\begin{theorem}[Littlewood--Paley estimates]
\label{thm2}
For $f\in L^p(\nu)$, $1<p<\infty$
\[
\| S^\nu f \|\ci{L^p(\nu)} \le C(p) \|f\|\ci{L^p(\nu)} . 
\]
Moreover, if a function $f\in L^p(\mu)$ is decomposed $f=\sum_{I\in\mathcal{L}}\Delta\ci{I}^\nu f$, then 
\[
c(p) \|f\|\ci{L^p(\nu)}  \le \| S^\nu f \|\ci{L^p(\nu)} . 
\]
Here the constants $0<c(p)<C(p)<\infty$ depend only on $p$. 
\end{theorem}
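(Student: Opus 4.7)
The plan is to reduce both bounds to Burkholder's classical martingale square function inequality. First I would rewrite $S^\nu f$ in terms of the martingale differences of the full filtration. For $I\in\cL_n$ one has $\Delta\ci{(I,n)}^\nu f = (\E\ci{n+1}^\nu f - \E\ci{n}^\nu f)\1\ci I$, and since the atoms in $\cL_n$ are disjoint, setting $d_n := \E\ci{n+1}^\nu f - \E\ci{n}^\nu f$ gives
\[
\sum_{n\in\Z} |d_n|^2 \;=\; \sum_{n\in\Z}\sum_{I\in\cL_n} |\Delta\ci{(I,n)}^\nu f|^2.
\]
An atom that appears in several generations contributes only through its top generation (for the lower ones $\ch(I,n)=\{I\}$ and the corresponding martingale difference vanishes), so the right hand side coincides with the geometric square function $(S^\nu f)^2$ from \eqref{SqFn}.

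Next I would truncate in time: for each $N\ge 1$ set $f_N := \E\ci{N+1}^\nu f - \E\ci{-N}^\nu f = \sum_{|n|\le N} d_n$ and $S^\nu_N f := (\sum_{|n|\le N} |d_n|^2)^{1/2}$. The sequence $(d_n)_{|n|\le N}$ is a finite martingale difference sequence with respect to $(\fS_n)_{|n|\le N+1}$, so Burkholder's inequality, transferred from the probability-space version to our $\sigma$-finite atomic setting by first restricting to an atom of $\fS\ci{-N}$ of finite $\nu$-measure and then summing over such atoms, yields
\[
c_p \|S^\nu_N f\|\ci{L^p(\nu)} \;\le\; \|f_N\|\ci{L^p(\nu)} \;\le\; C_p\|S^\nu_N f\|\ci{L^p(\nu)},\qquad 1<p<\infty.
\]

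Finally I would let $N\to\infty$. Contractivity of conditional expectations on $L^p(\nu)$ gives $\|f_N\|\ci{L^p(\nu)} \le 2\|f\|\ci{L^p(\nu)}$, hence $\|S^\nu_N f\|\ci{L^p(\nu)} \le 2C_p\|f\|\ci{L^p(\nu)}$; together with $S^\nu_N f \nearrow S^\nu f$ and monotone convergence this yields the upper bound. For the lower bound, I would interpret the decomposition $f = \sum_{I\in\cL}\Delta\ci I^\nu f$ as the $L^p(\nu)$-convergence of the partial sums $f_N=\sum_{|n|\le N}d_n$ to $f$; combined with the lower Burkholder estimate and monotone convergence of $S^\nu_N f \nearrow S^\nu f$, this produces $c_p\|S^\nu f\|\ci{L^p(\nu)} \ge \|f\|\ci{L^p(\nu)}$.

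The only delicate point, and the main obstacle, is the transfer of Burkholder's inequality from the probability-space setting to a $\sigma$-finite bi-infinite filtration in which atoms may have infinite measure; this is handled routinely by the restrict-and-exhaust argument indicated above, but has to be phrased with some care at the $-\infty$ end of the filtration, where one uses that $\|\E\ci{-N}^\nu f\|\ci{L^p(\nu)}\le\|f\|\ci{L^p(\nu)}$ and (for the lower bound) that $f_N\to f$ in $L^p(\nu)$ by hypothesis.
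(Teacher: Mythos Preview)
The paper does not give a proof of this theorem: it is simply introduced with the phrase ``We will need the following well-known result'' and then used as a black box (to pass between the paraproduct $\pi_b^{\mu,\nu}$ and the generalized vector paraproduct $\Pi_\beta$, see Remark~\ref{r:beta} and Section~\ref{s:TrivImpl}). So there is nothing in the paper to compare your argument against.

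That said, your sketch is the standard textbook route and is correct. Identifying the geometric square function \eqref{SqFn} with the probabilistic one $\bigl(\sum_n |d_n|^2\bigr)^{1/2}$ via the observation that repeated atoms contribute zero martingale differences except at their top generation, applying Burkholder's inequality on each finite-measure atom of $\fS\ci{-N}$, summing, and passing to the limit by monotone convergence and the assumed $L^p$-convergence $f_N\to f$ is exactly how one proves this in the $\sigma$-finite atomic setting. One small notational slip: in your last line you write ``$c_p\|S^\nu f\|\ci{L^p(\nu)} \ge \|f\|\ci{L^p(\nu)}$'' using the same constant $c_p$ that earlier denoted the \emph{lower} Burkholder constant; the inequality you actually obtain from $\|f_N\|\ci{L^p(\nu)}\le C_p\|S^\nu_N f\|\ci{L^p(\nu)}$ and $f_N\to f$ is $\|f\|\ci{L^p(\nu)}\le C_p\|S^\nu f\|\ci{L^p(\nu)}$, i.e.\ with the upper Burkholder constant. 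This is only a labeling issue and does not affect the argument.
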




This theorem allows us to reduce Theorems \ref{thm1} and \ref{t:para-p>2} to estimates of generalized vector-valued paraproducts. 

Following \cite{T2} define the Triebel--Lizorkin type space $\gb_p^q(\cL, \nu)$ as the space of sequences 
$s=\{s\ci I\}_{I\in\cL}$ such that
\[
\| s\|_{\gb_p^q(\cL)} := \biggl\| \biggl( \sum_{I\in\cL} |s\ci I |^q  \1\ci I \biggr)^{1/q} \biggr\|_{L^p}  <\infty  .
\]

For $I\in \cL$ let $\widehat I$ denote the parent of $I$. Let $\beta=\{\beta\ci{I,I'}\}\ci{I\in\cL, I'\in\ch(I)}$ be a numerical sequence. Define the \emph{generalized vector paraproduct}  $\Pi_\beta$ acting from $L^p(\mu)$ to the space of sequences, 
\[
\Pi_\beta f = \left\{ \La f\Ra\ci{\widehat I,\mu} \beta\ci{\widehat I, I} \right\}\ci{I\in\cL}
\]

\begin{remark}
\label{r:beta}
By the Littlewood--Paley estimates (Theorem \ref{thm2}) we can see that the estimate \eqref{eq4} is equivalent (up to a constant depending only on $p$) to the bounds on the operator $\Pi_\beta : L^p(\mu)\to \gb_p^2(\cL, \nu)$, where $\beta\ci{\widehat I, I} $ is the value of $\Delta\ci{\widehat I}^\nu$ on $I$.
\end{remark}

Theorem below hold for arbitrary operators $\Pi_\beta$, not only of the ones that came from the paraproducts $\pi_b^{\mu,\nu}$. For $I\in \cL$ define  
\begin{align}
\label{b_I}
b\ci I:= \sum_{I'\in\ch(I) } \beta\ci{I,I'} \1\ci{I'}.
\end{align}

\begin{theorem}
\label{t:SqPara-p<2}
Let $1<p\le 2$. Then the estimate 
\begin{align}
\label{SqEst-01}
\|\Pi_\beta f\|\ci{\gb_p^2(\nu)}^p\le A^p \|f\|\ci{L^p(\mu)}^p  \qquad \forall f\in L^p(\mu),
\end{align}
holds if and only if for all $J\in \cL$
\begin{align}
\label{SqTest-01}
\int_J\Biggl(\sum_{I\in\mathcal{L}(J)}\bigl|b\ci I \bigr|^2\Biggl)^{p/2}d\nu\le B^p\mu(J).
\end{align}
Moreover, for the best constants we have $B\le A\le C(p) B$, where the constant $C(p) $ depends only on $p$. 
\end{theorem}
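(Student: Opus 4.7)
My plan is as follows. For necessity, testing \eqref{SqEst-01} on $f = \1\ci J$ I note that $\La\1\ci J\Ra\ci{\widehat I,\mu} = 1$ for every $I \in \cL$ with $I \subsetneq J$, and regroup the inner sum by the parent $K = \widehat I$, which ranges precisely over $\cL(J)$. Using $|b\ci K|^2 = \sum_{I \in \ch(K)} |\beta\ci{K,I}|^2 \1\ci I$, I obtain
\[
A^p \mu(J) \ge \|\Pi_\beta \1\ci J\|\ci{\gb_p^2(\nu)}^p \ge \int_J \Bigl(\sum_{K \in \cL(J)} |b\ci K|^2\Bigr)^{p/2} d\nu,
\]
so $B \le A$.

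For sufficiency the same regrouping yields the pointwise identity
\[
\sum_{I \in \cL} \bigl|\La f \Ra\ci{\widehat I, \mu}\, \beta\ci{\widehat I, I}\bigr|^2 \1\ci I \;=\; \sum_{J\in\cL} \La f\Ra\ci{J,\mu}^2\, |b\ci J|^2,
\]
and after replacing $f$ by $|f|$ I may assume $f \ge 0$. The plan is then a corona/stopping-time decomposition of $\cL$ driven by the averages $\La f\Ra\ci{J,\mu}$. Fix $a > 1$ (say $a = 2$) and build $\cF \subset \cL$ recursively, where the $\cF$-children of $F$ are the maximal $F' \subsetneq F$ with $\La f\Ra\ci{F',\mu} > a \La f\Ra\ci{F,\mu}$. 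For $J \in \cL$ let $F(J) \in \cF$ be the smallest stopping interval containing $J$; set $\cG\ci F := \{J : F(J) = F\}$ and $E\ci F := F \setminus \bigcup \ch\ci\cF(F)$. Two standard properties of this construction are $\La f\Ra\ci{J,\mu} \le a \La f\Ra\ci{F,\mu}$ for every $J \in \cG\ci F$, and $\mu(E\ci F) \ge (1 - 1/a)\,\mu(F)$, the latter by the usual $L^1$-pigeonhole on the stopping children.

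The main estimate then proceeds via the concavity inequality $(x+y)^{p/2} \le x^{p/2} + y^{p/2}$, valid since $p/2 \le 1$ (the key place the hypothesis $p\le 2$ enters):
\begin{align*}
\int_\cX \Bigl(\sum_J \La f\Ra\ci{J,\mu}^2 |b\ci J|^2\Bigr)^{p/2} d\nu
&\le a^p \int_\cX \Bigl(\sum_{F\in\cF} \La f\Ra\ci{F,\mu}^2 \sum_{J\in\cG\ci F} |b\ci J|^2 \Bigr)^{p/2} d\nu \\
&\le a^p \sum_{F \in \cF} \La f\Ra\ci{F,\mu}^p \int_F \Bigl(\sum_{J\in\cG\ci F} |b\ci J|^2\Bigr)^{p/2} d\nu \\
&\le a^p B^p \sum_{F\in\cF} \La f\Ra\ci{F,\mu}^p \mu(F),
\end{align*}
the last step using $\cG\ci F \subset \cL(F)$ together with \eqref{SqTest-01}. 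To conclude, the disjointness of $\{E\ci F\}$ and $\mu(F) \le \tfrac{a}{a-1}\mu(E\ci F)$ give
\[
\sum_{F\in\cF} \La f\Ra\ci{F,\mu}^p \mu(F) \;\lesssim\; \int_\cX (M\ci\cF f)^p d\mu \;\le\; C(p)\, \|f\|\ci{L^p(\mu)}^p
\]
by Doob's inequality for the martingale maximal function along $\cF$, giving $A \le C(p) B$. The only real obstacle I anticipate is technical: setting up $\cF$ cleanly when $\cL$ has no maximal element (typical in the $\sigma$-finite setup), which I would resolve by exhausting $\cX$ along a nested sequence of atoms of positive $\mu$-mass, running the construction separately on each, and passing to the limit with bounds that are uniform in the exhaustion.
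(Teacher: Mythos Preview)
Your proof is correct and follows essentially the same route as the paper: the same stopping-time (corona) construction on the averages of $f$, the same use of the subadditivity of $t\mapsto t^{p/2}$ for $p\le 2$ (the paper writes this as $\|x\|_{\ell^q}\le\|x\|_{\ell^p}$), and the same application of the testing condition on each corona. The only cosmetic differences are that you bound $\sum_{F\in\cF}\La f\Ra\ci{F,\mu}^p\mu(F)$ via the disjointness of the sets $E\ci F$ and the maximal function, whereas the paper invokes the Carleson Embedding Theorem directly, and that you handle the absence of a maximal element by exhaustion while the paper truncates to finitely many nonzero $\beta\ci{I,I'}$; both are standard and equivalent.
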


\begin{theorem}
\label{t:SqPara-p>2}
Let $2<p<\infty$. The estimate \eqref{SqEst-01} 
holds if and only if \eqref{SqTest-01} and 
\begin{align}
\label{TestAdjPi}
\int_{J}\left[\sum_{I\in\mathcal{L}(J)}
\frac{\nu{(I)}}{\mu{(I)}}\mathbb{E}\ci{I}^\nu\left( | b\ci I |^2\right)\right]^{r'}d\mu\le B_*^{r'} \nu(J), \qquad r=p/2
\end{align}
hold for all $J\in\cL$. 

Moreover, for the best constants we have $ \max\{B, c(p) B_*^{1/2} \}\le A \le C(p) \max\{B,B_*^{1/2}\}$. 
\end{theorem}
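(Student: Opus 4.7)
The plan is to reduce Theorem \ref{t:SqPara-p>2} to the Sawyer-type characterization of a positive dyadic operator by using Jensen's inequality to pass from the paraproduct to that operator.

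\textbf{Reduction to a positive operator.} Set $r = p/2 > 1$ and define
\[
UF := \sum_{K\in\cL}\langle F\rangle_K^\mu\,|b_K|^2,\qquad F\ge 0.
\]
Jensen's inequality $(\langle f\rangle_K^\mu)^2\le\langle|f|^2\rangle_K^\mu$, together with nonnegativity of the summands, gives
\[
\|\Pi_\beta f\|_{\gb_p^2(\nu)}^{p}
 =\int\Bigl(\sum_{K}(\langle f\rangle_K^\mu)^2|b_K|^2\Bigr)^{r}d\nu
 \le\|U(|f|^2)\|_{L^r(\nu)}^{r}.
\]
So \eqref{SqEst-01} is implied by the boundedness $U:L^r(\mu)\to L^r(\nu)$, and $A^2\lesssim\|U\|_{L^r(\mu)\to L^r(\nu)}$.

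\textbf{Sufficiency.} Since $r>1$, I would apply the Sawyer testing theorem for positive dyadic operators (from \cite{LSU} or \cite{T1}), which characterizes boundedness of $U:L^r(\mu)\to L^r(\nu)$ by the two testing conditions
\[
\int_J(U\1_J)^{r}d\nu\lesssim\mu(J),\qquad\int_J(U^*\1_J)^{r'}d\mu\lesssim\nu(J),\qquad\forall J\in\cL,
\]
where $U^*g=\sum_K\mu(K)^{-1}\bigl(\int_K g\,|b_K|^2\,d\nu\bigr)\1_K$. A direct computation gives
\begin{align*}
(U\1_J)|_J&=\sum_{K\subseteq J}|b_K|^2+c_J,\\
(U^*\1_J)|_J&=\sum_{K\subseteq J}\frac{\nu(K)}{\mu(K)}\langle|b_K|^2\rangle_K^\nu\,\1_K+c'_J,
\end{align*}
with $c_J,c'_J$ constants on $J$ coming from $K\supsetneq J$. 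The interior parts are exactly the $r$th and $r'$th powers of the quantities in \eqref{SqTest-01} and \eqref{TestAdjPi}; the exterior constants $c_J, c'_J$ are then controlled by a telescoping argument that applies \eqref{SqTest-01} and \eqref{TestAdjPi} at the ancestors of $J$.

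\textbf{Necessity and main obstacle.} Testing $f=\1_J$ in \eqref{SqEst-01} immediately yields \eqref{SqTest-01} with $B\le A$. For \eqref{TestAdjPi} I would use $\gb_p^2(\nu)$--$\gb_{p'}^2(\nu)$ duality: for each $J\in\cL$, choose the test sequence $t_I=\overline{\beta_{\widehat I,I}}\,\1[\widehat I\subseteq J]$, so that $\langle\Pi_\beta f,t\rangle=\sum_{K\subseteq J}\langle f\rangle_K^\mu\int_K|b_K|^2\,d\nu$ is the dual expression of the left side of \eqref{TestAdjPi}. Combining the paraproduct bound with the estimate of $\|t\|_{\gb_{p'}^2(\nu)}$ provided by \eqref{SqTest-01} and H\"older is expected to yield \eqref{TestAdjPi} with $B_*^{1/2}\lesssim A$. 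The hard part lies here: because Jensen's inequality is one-sided, the paraproduct bound does not directly give boundedness of the positive operator $U$, and the naive use of the above test sequence only controls $\|\Psi_J\|_{L^{p'}(\mu)}$ rather than the stronger $\|\Psi_J\|_{L^{r'}(\mu)}$ (where $\Psi_J$ is the integrand inside the brackets in \eqref{TestAdjPi}). Closing this gap requires a refined test sequence incorporating an auxiliary $L^r(\mu)$ witness and careful tracking of constants to reach $A\asymp\max\{B,B_*^{1/2}\}$.
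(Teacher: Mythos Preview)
Your reduction to the positive operator $U=\wt\Pi_\beta$ via Jensen is exactly the paper's strategy, but two genuine gaps remain.

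\textbf{Sufficiency.} The operator $U$ is \emph{not} a positive dyadic operator in the sense of \cite{LSU} or \cite{T1}: in those results the coefficient attached to $I$ is a constant multiple of $\1_I$, whereas here $|b_I|^2$ is only constant on the \emph{children} of $I$. Equivalently, $UF=\sum_{I'}\gamma_{I'}\langle F\rangle_{\widehat{I'},\mu}\1_{I'}$, with the average taken at the parent. In the non-homogeneous setting there is no comparison between $\langle F\rangle_{\widehat{I'},\mu}$ and $\langle F\rangle_{I',\mu}$, so \cite{T1} does not apply, and no telescoping of the tails $c_J,c'_J$ repairs this. The paper handles the shift by proving a separate Sawyer-type theorem for such \emph{shifted} positive operators (Theorem \ref{thm5}); the new ingredient is a modified stopping construction (Section \ref{s:ModStop}) in which the stopping criterion $\langle f\rangle_{\widehat I,\mu}>2\langle f\rangle_{J,\mu}$ is tested on the parent $\widehat I$ but the stopping interval recorded is the child $I\in\cA$, so that the intervals in $\cG$ lie in the correct family while the Carleson packing still holds.

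\textbf{Necessity of \eqref{TestAdjPi}.} You correctly identify that Jensen is one-sided and that your duality test sequence only controls an $L^{p'}(\mu)$ norm, not the required $L^{r'}(\mu)$ norm. The paper closes this gap by a Rubio de Francia argument (Proposition \ref{p:Pi-WtPi}): given $F\ge0$ in $L^r(\mu)$, set $\wt F:=R_{\mu,r}F$ and $g:=\wt F^{1/2}$. The $A_1$-type property of $R_{\mu,r}$ yields the reverse H\"older inequality $\langle F\rangle_{I,\mu}\le\langle\wt F\rangle_{I,\mu}\le 2r'\bigl(\langle g\rangle_{I,\mu}\bigr)^2$, whence $\|UF\|_{L^r(\nu)}\le 2r'\|\Pi_\beta g\|_{\gb_p^2(\nu)}^2\le 4r'A^2\|F\|_{L^r(\mu)}$. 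Thus boundedness of $\Pi_\beta$ \emph{does} imply boundedness of $U$, and \eqref{TestAdjPi} is then simply the (trivially necessary) localized testing for $U^*$, giving $B_*\le 4r'A^2$, i.e.\ $c(p)B_*^{1/2}\le A$.
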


Theorems \ref{t:SqPara-p<2} and \ref{t:SqPara-p>2} are particular cases of the theorems below. 

\begin{theorem}
\label{t:SqPara-p<q}
Let $1<q<\infty$, $1<p\le q$. Then the estimate 
\begin{align}
\label{SqEst-01-q}
\|\Pi_\beta f\|\ci{\gb_p^q(\nu)}^p\le A^p \|f\|\ci{L^p(\mu)}^p  \qquad \forall f\in L^p(\mu),
\end{align}
holds if and only if for all $J\in \cL$
\begin{align}
\label{SqTest-01-q}
\int_J\Biggl(\sum_{I\in\mathcal{L}(J)}\bigl|b\ci I \bigr|^q\Biggl)^{p/q}d\nu\le B^p\mu(J).
\end{align}
Moreover, for the best constants we have $B\le A\le C(p,q) B$, where the constant $C(p,q) $ depends only on $p$ and $q$. 
\end{theorem}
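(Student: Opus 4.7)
The necessity of \eqref{SqTest-01-q} is extracted in the classical way by plugging $f=\mathbf{1}\ci J$ into \eqref{SqEst-01-q}, while the sufficiency is the standard sparse/stopping-time scheme; the one key point is that the outer power $p/q$ is $\le 1$, which allows subadditivity to distribute it cleanly over the stopping generations.

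\textbf{Necessity.} For every $I\in\cL(J)$ with $I\subsetneq J$ the parent $\widehat I$ also lies in $\cL(J)$ (or equals $J$), so $\langle \mathbf{1}\ci J\rangle\ci{\widehat I,\mu}=1$ and $(\Pi_\beta \mathbf{1}\ci J)\ci I=\beta\ci{\widehat I,I}$. Because the children of each atom are disjoint, reindexing the sum over $I\subsetneq J$ by $I''=\widehat I$ yields the pointwise identity
\[
\sum_{I\in\cL(J),\, I\subsetneq J}|\beta\ci{\widehat I,I}|^q\mathbf{1}\ci I=\sum_{I''\in\cL(J)}|b\ci{I''}|^q,
\]
which is supported in $J$; dropping the nonnegative contributions from $I\supseteq J$ and using $\|\mathbf{1}\ci J\|_{L^p(\mu)}^p=\mu(J)$ gives \eqref{SqTest-01-q} with $B\le A$.

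\textbf{Sufficiency.} Assume $f\ge 0$ is a finite-rank martingale sitting under a top atom; the general case follows by exhaustion and monotone convergence. Build $\mathcal{F}\subset\cL$ recursively: the $\mathcal{F}$-children of $F\in \mathcal{F}$ are the maximal $I\subsetneq F$ with $\langle f\rangle\ci{I,\mu}>2\langle f\rangle\ci{F,\mu}$. Writing $\pi(J)$ for the smallest $F\in\mathcal{F}$ containing $J$ and $[F]:=\pi^{-1}(F)$, the stopping rule gives $\langle f\rangle\ci{J,\mu}\le 2\langle f\rangle\ci{\pi(J),\mu}$ for every $J\in\cL$, hence the pointwise bound
\[
\sum_{J\in\cL}\langle f\rangle\ci{J,\mu}^{q}\,|b\ci J|^{q}\le \sum_{F\in \mathcal{F}} (2\langle f\rangle\ci{F,\mu})^{q}\,\mathbf{1}\ci F\,\Bigl(\sum_{J\in[F]}|b\ci J|^{q}\Bigr).
\]
Taking the outer $(p/q)$-power --- this is where $p\le q$ is used, via the subadditivity $(\sum a_F)^{p/q}\le\sum a_F^{p/q}$ --- integrating against $\nu$, enlarging $[F]$ to $\cL(F)$ in each summand (all terms are nonnegative), and applying the testing condition \eqref{SqTest-01-q} produce
\[
\|\Pi_\beta f\|_{\gb_p^q(\nu)}^p\le 2^p B^p\sum_{F\in \mathcal{F}}\langle f\rangle\ci{F,\mu}^{p}\mu(F).
\]
The right-hand side is bounded by $C(p)\|f\|_{L^p(\mu)}^p$ through the Carleson embedding on the Carleson family $\mathcal{F}$ --- equivalently, by Doob's maximal inequality applied on the pairwise $\mu$-disjoint principal sets $E_F:=F\setminus\bigcup_{F'\in\ch_\mathcal{F}(F)}F'$, which satisfy $\mu(E_F)\ge\mu(F)/2$ by construction. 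Thus $A\le C(p,q)B$.

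\textbf{Main obstacle.} The only step that genuinely needs $p\le q$ is the subadditivity of $x\mapsto x^{p/q}$ used to distribute the outer power over the stopping family; this is exactly the step that breaks when $p>q$ and forces the additional adjoint-type testing condition \eqref{TestAdjPi} appearing in Theorem \ref{t:SqPara-p>2}. The non-homogeneity of $\mu,\nu$ introduces no extra difficulty here, since the argument never passes through doubling, reverse H\"older, or any adjoint-paraproduct bound --- everything runs through a single $\mu$-stopping decomposition of $f$ followed by the Carleson embedding for $\mu$.
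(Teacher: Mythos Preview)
Your proposal is correct and follows essentially the same approach as the paper: the same stopping-time family based on doubling of $\mu$-averages of $f$, the same use of the subadditivity of $x\mapsto x^{p/q}$ (the paper phrases it as $\|x\|_{\ell^q}\le\|x\|_{\ell^p}$), the same appeal to the testing condition \eqref{SqTest-01-q} on each stopping interval, and the same closing via the Carleson embedding theorem. Your identification of the subadditivity step as the unique place where $p\le q$ is used matches the paper's implicit reasoning.
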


\begin{remark}
\label{r:Cu}
The above Theorem \ref{t:SqPara-p<q}  was proved in \cite{Cu} even in higher generality: the functions $b\ci I\ge0$ there could be arbitrary measurable functions supported on $I$, and the case $q=\infty$ was also treated there. 

For the convenience of the reader we present a proof here: the proof essentially the same as the proof in \cite{Cu}, although in \cite{Cu} due to a clever choice of the stopping parameter a better constant is obtained. 
\end{remark}

\begin{theorem}
\label{t:SqPara-p>q}
Let $1<q<\infty$, $q<p<\infty$. The estimate \eqref{SqEst-01-q} 
holds if and only if \eqref{SqTest-01-q} and 
\begin{align}
\label{TestAdjPi-q}
\int_{J}\left[\sum_{I\in\mathcal{L}(J)}
\frac{\nu{(I)}}{\mu{(I)}}\mathbb{E}\ci{I}^\nu\left( | b\ci I |^q\right)\right]^{r'}d\mu\le B_*^{r'} \nu(J), \qquad r=p/q, \ 1/r+1/r'=1
\end{align}
hold for all $J\in\cL$. 

Moreover, for the best constants we have $ \max\{B, c(p,q) B_*^{1/q} \}\le A \le C(p,q) \max\{B,B_*^{1/q}\}$. 
\end{theorem}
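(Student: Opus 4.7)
The plan is to reduce Theorem \ref{t:SqPara-p>q} to a two-weight bound for an auxiliary positive dyadic operator, and then invoke the known two-weight characterization for such operators (Lacey--Sawyer--Uriarte-Tuero / Treil).

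\textbf{Reformulation and auxiliary operator.} First, since $|b_J|^q = \sum_{I\in\ch(J)}|\beta\ci{J,I}|^q\1\ci I$, grouping by $J=\widehat I$ rewrites the target norm as
\[
\|\Pi_\beta f\|_{\gb_p^q(\nu)}^p = \int_\cX \Bigl(\sum_{J\in\cL} |\La f \Ra\ci{J,\mu}|^q \,|b_J|^q\Bigr)^{r} d\nu, \qquad r:=p/q>1.
\]
Introduce the positive dyadic operator $Tg := \sum_{J\in\cL}\La g\Ra\ci{J,\mu}|b_J|^q$ for $g\ge 0$. Since $|b_J|^q\ge 0$ is supported on $J$ and constant on $\ch(J)$, $T$ falls under the Lacey--Sawyer--Uriarte-Tuero / NTV characterization for $L^r(\mu)\to L^r(\nu)$. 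A direct computation gives the restrictions
\[
T\1\ci{J_0}\big|\ci{J_0} = \sum_{I\in\cL(J_0)}|b_I|^q,\qquad T^*\1\ci{J_0}\big|\ci{J_0}=\sum_{I\in\cL(J_0)}\frac{\nu(I)}{\mu(I)}\E\ci I^\nu(|b_I|^q)\1\ci I,
\]
so that the direct and adjoint Sawyer-type testing conditions for $T$ coincide exactly with \eqref{SqTest-01-q} and \eqref{TestAdjPi-q}.

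\textbf{Sufficiency.} Jensen's inequality gives the pointwise bound $|\La f\Ra\ci{J,\mu}|^q\le \La |f|^q\Ra\ci{J,\mu}$, hence $\sum_J|\La f\Ra\ci J|^q|b_J|^q \le T(|f|^q)$. Taking $L^r(\nu)$-norms and using $\|\,|f|^q\|_{L^r(\mu)}^r=\|f\|_{L^p(\mu)}^p$ together with the two-weight bound $\|T\|_{L^r(\mu)\to L^r(\nu)}\lesssim_{p,q}\max\{B^q,B_*\}$ supplied by LSU/NTV yields
\[
\|\Pi_\beta f\|_{\gb_p^q(\nu)}^q \le \|T(|f|^q)\|_{L^r(\nu)} \lesssim \max\{B^q,B_*\}\|f\|_{L^p(\mu)}^q,
\]
i.e.\ $A\le C(p,q)\max\{B,B_*^{1/q}\}$.

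\textbf{Necessity and the main obstacle.} Testing \eqref{SqEst-01-q} on $f=\1\ci{J_0}$ gives \eqref{SqTest-01-q} (with $B\le A$) since $\La\1\ci{J_0}\Ra\ci{J,\mu}=1$ whenever $J\subseteq J_0$. The adjoint testing \eqref{TestAdjPi-q} is the hard part. The Jensen step above is one-sided, so $\Pi_\beta$ bounded does \emph{not} formally imply $T$ bounded, and hence the necessity of the adjoint Sawyer condition for $T$ cannot simply be read off from LSU. Instead we use duality $(\gb_p^q(\nu))^*=\gb_{p'}^{q'}(\nu)$ and test $\Pi_\beta^*$ against the sequence $t_I:=\1\ci{\widehat I\subseteq J_0}\,\overline{\beta\ci{\widehat I,I}}\,|\beta\ci{\widehat I,I}|^{q-2}$; using $(q-1)q'=q$ one verifies that $\Pi_\beta^*t$ equals exactly $G\ci{J_0}:=\sum_{J\in\cL(J_0)}\frac{\nu(J)}{\mu(J)}\E\ci J^\nu(|b_J|^q)\1\ci J$, the integrand in \eqref{TestAdjPi-q}. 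A naive duality bound gives only $\|G\ci{J_0}\|_{L^{p'}(\mu)}\le A\|t\|_{\gb_{p'}^{q'}(\nu)}$ with $\|t\|^{p'}_{\gb_{p'}^{q'}(\nu)}=\int(\sum_{J\in\cL(J_0)}|b_J|^q)^{p'/q'}d\nu$. The chief obstacle is that \eqref{TestAdjPi-q} demands an $L^{r'}(\mu)$-bound on $G\ci{J_0}$, and since $r'>p'$ whenever $q>1$ this is \emph{stronger} than what duality gives directly. Closing the exponent gap -- by combining the $L^{p'}$-duality bound with the already-established direct testing \eqref{SqTest-01-q} via a stopping-time / self-improvement decomposition tailored to the paraproduct, and choosing test sequences more cleverly weighted by functionals of $|b_J|^q$ -- is the essential technical content, and it produces the sharp lower bound $c(p,q)B_*^{1/q}\le A$.
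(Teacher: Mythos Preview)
Your overall strategy---reduce \eqref{SqEst-01-q} to the two-weight bound for the positive operator $T g=\sum_J \langle g\rangle\ci{J,\mu}|b_J|^q$ and then read off both testing conditions---is exactly the paper's. But there is a genuine gap in the necessity direction, and the key idea you are missing is the Rubio de Francia trick.

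You correctly identify that Jensen is one-sided and that boundedness of $\Pi_\beta$ does not formally give boundedness of $T$. Your proposed fix---test $\Pi_\beta^*$ on the sequence $t_I$, get an $L^{p'}(\mu)$ bound on $G\ci{J_0}$, then ``close the exponent gap'' to $L^{r'}(\mu)$ by some unspecified stopping-time/self-improvement argument---is not a proof; it is a description of what remains to be done, and it is not at all clear how to make it work. The paper does something completely different and much cleaner: it shows directly that $\|T\|\ci{L^r(\mu)\to L^r(\nu)}\le 4r'\,\|\Pi_\beta\|^q$ (Proposition~\ref{p:Pi-WtPi}). The mechanism is to replace $|f|$ by its Rubio de Francia majorant $\wt f=R_{\mu,r}|f|$ and set $g=\wt f^{1/q}$; then $\wt f$ satisfies the reverse H\"older inequality $\langle g^q\rangle\ci{I,\mu}\le 2r'\langle g\rangle\ci{I,\mu}^q$, which reverses the Jensen step up to a constant. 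This gives $T\wt f\le 2r'\sum_I|\langle g\rangle\ci{I,\mu}|^q|b_I|^q$, and taking $L^r(\nu)$ norms and using $\|\wt f\|\ci{L^r}\le 2\|f\|\ci{L^r}$ finishes the bound. Once $T$ is bounded, \eqref{TestAdjPi-q} is just the trivial adjoint testing for $T$. This is the idea you are missing; your duality approach through $(\gb_p^q)^*$ is a dead end for the reason you yourself noted.

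A smaller point on sufficiency: you invoke the LSU/Treil characterization for $T$, but $T$ is not quite a standard positive dyadic operator---the weight $|b_J|^q$ is constant only on children of $J$, not on $J$ itself. The paper calls this a \emph{shifted} positive operator and remarks explicitly that the proof in \cite{T1} used constancy on $J$ in an essential way; a modified stopping-time construction (Section~\ref{s:ModStop}, where stopping intervals are chosen in $\cA$ but the stopping criterion is checked on their parents) is needed to handle the shift. So your appeal to LSU as a black box is not quite justified either, though this gap is more routine to fill than the necessity one.
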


\subsubsection{Trivial implications and plan of the paper} 
\label{s:TrivImpl} 
Necessity of condition \eqref{SqTest-01-q} and the inequality $B\le A$ in Theorem \ref{t:SqPara-p<q} is easy: one just need to test \eqref{SqEst-01-q} on the function $\1\ci{\!J}$. It is then  easy to see that the left hand side of \eqref{SqTest-01-q} is majorated by $\|\Pi_\beta \1\ci{\!J}\|\ci{\gb_p^q(\mu)}^p$ (the latter includes some extra terms in the sum). 

By the Littlewood--Paley estimates Theorems \ref{thm1} and \ref{t:para-p>2} are equivalent Theorems \ref{t:SqPara-p<2} and \ref{t:SqPara-p>2} respectively for the special choice of $\beta$ given in Remark \ref{r:beta}. By the Littlewood--Paley estimates (Theorem \ref{thm2}) condition \eqref{eq5} is equivalent (up to a constant depending only on $p$) to condition  \eqref{SqTest-01} for this special choice of $\beta$. Condition \eqref{eq6}  is exactly condition \eqref{TestAdjPi} for this special choice of $\beta$. 

Necessity of condition \eqref{TestAdjPi-q} will be explained later in Section \ref{s:Nec*Test}. 

Proof of sufficiency in Theorem \ref{t:SqPara-p<q} is quite easy and will be presented below in Section \ref{s:Suff_p<2} below. 

The rest of the paper is devoted to the proof of sufficiency in Theorem \ref{t:SqPara-p>q}.

\section{The case \texorpdfstring{$1<p\le q$}{1<p =< q}: the sufficiency}
\label{s:Suff_p<2}

Recall that as we explained above in Remark \ref{r:Cu} a slightly more general statement that Theorem \ref{t:SqPara-p<q} was proved in \cite{Cu}. The proof in this section is presented only for the reader's convenience. 

Necessity of  condition \eqref{SqTest-01-q} in Theorem \ref{t:SqPara-p<q} was explained above in Section \ref{s:TrivImpl}. Below we show that this condition is sufficient, i.e.~that it implies estimate \eqref{SqEst-01-q}.

\subsection{Construction of the stopping moments}
\label{s2}

The following standard construction is used for the proof of Theorem \ref{t:SqPara-p<q}, it will also be needed later in the paper.

Given a (finite) collection $\cF\subset\cL$ of the intervals, we construct the collection $\cG\subset \cF$ of the  
stopping moments as follows. Given a non-negative function $f\geq 0$. For $J\in\mathcal{F}$, let $\mathcal{G}^*(J)$ be the collection of maximal intervals $I\in\mathcal{F}$, $I\subseteqq J$ such that 
\[
\langle f\rangle\ci{I,\mu}>2\langle f\rangle\ci{J,\mu}.
\]
Note that intervals from $\mathcal{G}^*(J)$ are pairwisely disjoint. Let $\mathcal{F}(J)=\{I\in\mathcal{F}: I\subseteq J\}$ and let $G(J)=\bigcup_{{I\in\mathcal{G}^*(J)}}I$. Define also $\mathcal{E}(J)=\mathcal{F}(J)\setminus\bigcup_{{I\in\mathcal{G}^*(J)}}\mathcal{F}(I)$. Then we have the following properties
\begin{enumerate}
\item For any $I\in\mathcal{E}(J)$, $\langle f\rangle\ci{I,\mu}\leq 2\langle f\rangle\ci{J,\mu}$,
\item $\mu(G(J))<\frac{1}{2}\mu(J)$.
\end{enumerate}

\par To construct a collection $\mathcal{G}$,  consider all maximal (by inclusion) intervals $J\in\cF $. These sets form the first generation $\mathcal{G}^*_1$ of stopping moments. Inductively define the $(n+1)$-th generation of stopping moments by $\mathcal{G}^*_{n+1}=\bigcup_{{I\in\mathcal{G}^*_n}}\mathcal{G}^*(I)$ and we define the collection of stopping moments by $\mathcal{G}=\cup_{n\geq 1}\mathcal{G}_n^*$.

\par Property (ii) implies that the collection $\mathcal{G}$ of stopping moments satisfies the famous \emph{Carleson measure condition} with constant $2$
\begin{align}\label{CarlEst}
\sum_{I\in\mathcal{G}, I\subseteq J}\mu(I)<2\mu(J), ~~~~J\in\mathcal{L}.
\end{align}
We will use to the following well-know result.

\begin{theorem}[Martingale Carleson Embedding Theorem]
\label{thm3}
Let $\mu$ be a measure on $(\mathcal{X}, \mathcal{T})$ and let $\alpha\ci{I}\geq 0$, $I\in\mathcal{L}$ satisfy the Carleson measure condition 
\begin{align*}
\sum_{I\in\mathcal{G}, I\subseteq J}\alpha\ci{I}\leq C\mu(J).
\end{align*}
Then for any measurable function $f$ and any $1<p<\infty$
\begin{align*}
\sum_{I\in\mathcal{L}}\alpha{\ci{I}}\left |\langle f\rangle\ci{I,\mu}\right |^p\leq C\cdot(p')^p\|f\|\ci{L^p(\mathcal{X}, \mathcal{T}, \mu)}^p\,, 
\end{align*}  
where $1/p'+1/p=1$. 
\end{theorem}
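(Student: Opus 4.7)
The plan is to reduce the embedding to the dyadic Doob maximal inequality via the layer-cake formula, which is the classical route for Carleson-type embeddings. Since $|\La f\Ra\ci{I,\mu}|\le \La |f|\Ra\ci{I,\mu}$, it suffices to treat $f\ge 0$. The main ingredients are: (a) the martingale maximal function $M^\mu f(x):=\sup_{I\in\cL,\, I\ni x}\La f\Ra\ci{I,\mu}$; (b) the standard Doob maximal estimate $\|M^\mu f\|\ci{L^p(\mu)}\le p'\|f\|\ci{L^p(\mu)}$ in the present atomic non-homogeneous setting; and (c) the nested structure of the collection of atoms $\cL$, which guarantees that maximal atoms on which an average exceeds a threshold form a pairwise disjoint family.

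First I would apply the layer-cake formula:
\[
\sum_{I\in\cL}\alpha\ci I \La f\Ra\ci{I,\mu}^{\,p}
= p\int_0^\infty \lambda^{p-1}\Biggl(\sum_{I:\,\La f\Ra\ci{I,\mu}>\lambda}\alpha\ci I\Biggr)d\lambda.
\]
For each fixed $\lambda>0$, let $\mathcal{M}(\lambda)\subset \cL$ be the family of maximal (with respect to inclusion of atoms) $J\in\cL$ with $\La f\Ra\ci{J,\mu}>\lambda$. Any $I$ with $\La f\Ra\ci{I,\mu}>\lambda$ is contained in some $J\in\mathcal{M}(\lambda)$, the atoms in $\mathcal{M}(\lambda)$ are pairwise disjoint, and each $J\in\mathcal{M}(\lambda)$ lies in $\{M^\mu f>\lambda\}$. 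Splitting the inner sum according to the maximal ancestor that contains $I$ and then applying the Carleson condition inside each such $J$ yields
\[
\sum_{I:\,\La f\Ra\ci{I,\mu}>\lambda}\alpha\ci I
\le \sum_{J\in\mathcal{M}(\lambda)}\ \sum_{I\subseteq J}\alpha\ci I
\le C\sum_{J\in\mathcal{M}(\lambda)}\mu(J)
\le C\,\mu\bigl\{x:M^\mu f(x)>\lambda\bigr\}.
\]

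Substituting this back and reassembling the layer cake then gives
\[
\sum_{I\in\cL}\alpha\ci I \La f\Ra\ci{I,\mu}^{\,p}
\le Cp\int_0^\infty \lambda^{p-1}\mu\{M^\mu f>\lambda\}\,d\lambda
= C\,\|M^\mu f\|\ci{L^p(\mu)}^p
\le C(p')^p\|f\|\ci{L^p(\mu)}^p,
\]
which is the claimed bound.

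I do not expect a serious obstacle, since this is the classical Carleson embedding argument. The only care that must be taken concerns the non-homogeneous atomic filtration in which an atom $I$ may belong to several generations $\cL_n$: the maximal operator $M^\mu$ and the family $\mathcal{M}(\lambda)$ are still well-defined because the average $\La f\Ra\ci{I,\mu}$ depends only on $I$ as a set, so in the geometric notation identifying different copies of $I$ causes no issue. One also has to verify that the Doob $L^p$ maximal inequality holds with the sharp constant $p'$ in this atomic setting, but this is completely analogous to the classical dyadic case: one integrates the distribution function $\mu\{M^\mu f>\lambda\}$ using the standard stopping-atom selection, exactly the same selection procedure used to construct $\cM(\lambda)$ above.
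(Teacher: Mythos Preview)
Your argument is correct and is precisely the approach the paper points to: it does not prove the theorem in-text but explains that the sharp constant $(p')^p$ comes from the Doob bound $\|M_\mu f\|_{L^p(\mu)}\le p'\|f\|_{L^p(\mu)}$, referring to \cite{T1} for this maximal-function reduction and to \cite{Lai-CET} for a direct proof. Your layer-cake/level-set argument is exactly that reduction, so there is nothing to add.
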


This theorem with some constant instead of $(p')^p$ is a well known fact in harmonic analysis. The constant $(p')^p$ can be obtained, for example, from the fact that the norm of the martingale maximal function in $L^p$ is estimated by $p'$. This connection with the constant in the Carleson Embedding Theorem is explained in  Section 4 of \cite{T1};
as for the constant in estimate of the maximal function see, for example, \cite[Theorem 14.1]{Wil-ProbMart-1991}. 

For a direct proof of Theorem \ref{thm3} see \cite{Lai-CET}; optimality of the constant $(p')^p$ is also proved there. 

\subsection{Sufficiency in Theorem \ref{t:SqPara-p<2}}
First of all notice that it is sufficient to prove \eqref{SqEst-01} only for $f\ge 0$, so let us assume without loss of generality that $f\ge0$.

Using standard approximation reasoning we can also assume without loss of generality that only finitely many of the terms $\beta\ci{I,I'}$ are non-zero. 

Then, applying the construction from the previous section with 
\[
\cF=\{I\in\cL: \exists I'\in\ch(I)\text{ s.t. }\beta\ci{I, I'} \ne 0\}
\]
we get the collection $\cG$ of the stopping moments.


For $J\in \cG$ denote
\[
F\ci{\!J} = \left(\sum_{I\in\cE(J)} \left|\La f \Ra\ci{I,\mu} \right|^q\left| b\ci I\right|^q \right)^{1/q}, 
\]
so 
\[
\left(\sum_{I\in\mathcal{L}}\left|\La f \Ra\ci{I,\mu} \right|^q\left| b\ci I\right|^q\right)^{p/q} =
\left(\sum_{J\in\cG}\sum_{I\in\cE(J)}\left|\La f \Ra\ci{I,\mu} \right|^q\left| b\ci I\right|^q\right)^{p/q}
= \left(\sum_{J\in\cG} |F\ci{\!J}|^q \right)^{p/q}
\]
Note that 
\begin{align}
\label{est-F_J}
\int |F\ci{\!J}|^p d\nu & = \int_\cX \left( \sum_{I\in\cE(J)} \left|\La f \Ra\ci{I,\mu} \right|^q\left| b\ci I\right|^q \right)^{p/q} d\nu 
\\  \notag
& \le 2^p |\La f\Ra\ci{\!J, \mu} |^p \int_\cX \left( \sum_{I\in\cE(J)} \left| b\ci I\right|^q \right)^{p/q}
&& \text{by \cond1  }
\\ \notag
&\le 2^p |\La f\Ra\ci{\!J, \mu} |^p B^p \mu(J)  
&& \text{by \eqref{SqTest-01-q}  }. 
\end{align}
Using the fact that $\|x\|\ci{\ell^q} \le \|x\|\ci{\ell^p}$ for $p\le q$, we can estimate
\begin{align*}
\int_{\mathcal{X}}\left(\sum_{I\in\mathcal{L}}\left|\La f \Ra\ci{I,\mu} \right|^q\left| b\ci I\right|^q\right)^{p/q}d\nu
&  = \int_{\mathcal{X}}\left(\sum_{J\in\mathcal{G}} |F\ci{\!J}|^q \right)^{p/q} d\nu   \\
& \leq \int_{\mathcal{X}}\left(\sum_{J\in\mathcal{G}} |F\ci{\!J}|^p\right) d\nu  && \text{because } \|x\|\ci{\ell^q} \le \|x\|\ci{\ell^p}
\\
& \le 2^p B^p \sum_{J\in\cG} |\La f\Ra\ci{\!J, \mu} |^p  \mu(J)  && \text{by \eqref{est-F_J}  }
\\
& \leq 2^{p+1}(p')^p B^p\|f\|\ci{L^p(\mu)}^p  &&\text{by Theorem \ref{thm3} and \eqref{CarlEst}}.
\end{align*}
\  \hfill \qed

\section{The case \texorpdfstring{$2<p<\infty$}{2<p<infty}: a counterexample}
\label{s:cEx}
In this section we show  that for $2<p<\infty$ condition \eqref{eq5} alone  is not sufficient for \eqref{eq4}.  More precisely, we construct a counterexample, showing that Theorem \ref{t:SqPara-p<2} with the special choice of $\beta$ given in Remark \ref{r:beta} fails  for any $p>2$, i.e.~that condition \eqref{SqTest-01} does not imply \eqref{SqEst-01} in this case. By the Littlewood--Paley estimates (Theorem \ref{thm2}) this will imply that  condition \eqref{eq5}   is not sufficient for \eqref{eq4} for any $p>2$. 

 Consider the real line $\mathbb{R}$ with the Borel $\sigma$-algebra $\mathcal{B}(\mathbb{R})$. Let the $\sigma$-algebra $\fS_n$ be generated by the triadic intervals $I_k^n=3^{-n}[k, k+1)$, $k\in\Z$, so $\cL$ is the collection of all triadic intervals $I_k^n$, $k, n\in\Z$. 
 
 We specify the measures $\mu, \nu$ and the functions $b, f$ in the following way. 

Let $C_n$, $n\ge 0$ be the sets similar%
\footnote{The $\frac13$-Cantor set is defined as $\bigcap_{n\ge0} \overline C_n$, where $\overline C_n$ is the closure of $C_n$.  }
to the ones appearing in the construction of the 
 $\frac{1}{3}$-Cantor set, namely $C_0=[0,1), C_1=[0, 1/3)\cup[2/{3}, 1)$ and, in general, 
\[
C_n=\bigcup_{x=\sum_{j=1}^n\varepsilon_j3^{-j},\ \varepsilon_j\in\{0, 2\}} [x, x+3^{-n}).
\] 
Note that the above intervals $[x, x+3^{-n})$ are the connected components of $C_n$, and $C_n$ is a union of    $2^n$ such intervals.

The measure $\mu$ is taken to be the Lebsgue measure restricted on $[0, 1)$ and the measure $\nu$ is the Cantor measure, i.e. for each triadic interval $I=I^n_k =3^{-n}[k, k+1)$ of length $3^{-n}$ we have  $\nu(I)=2^{-n}$ if $I\subset C_n$ and $\nu(I)=0$ otherwise. 

For the function $b$, we specify its martingale differences $\Delta\ci{I}^\nu b$. Let $I=I_k^n$ be a triadic interval, $I\subset C_n$, and let $I_\pm$ be its right and left thirds respectively (note that $I_\pm\subset C_{n+1}$). 

Assume that $p>2$ is fixed, and 
define $\Delta\ci I^\nu := (2/3)^{n/p} \left( \1\ci{I_+}-\1\ci{I_-}\right)$.

Now let us construct the function $f$. The set $D_n:=C_{n-1}\setminus C_{n}$, $n\ge 1$ is a union of $2^{n-1}$ triadic intervals of length $3^{-n}$, and the sets $D_n$ are clearly disjoint. Fix $r$, $1/p<r<1/2$ and define 
\[
f:= \sum_{n\in\N} (3/2)^{n/p} n^{-r} \1\ci{D_n}. 
\]

\begin{claim}
The above construction gives a counterexample.
\end{claim}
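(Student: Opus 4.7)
The plan is to verify three things: $f\in L^p(\mu)$; the symbol $b$ satisfies the testing condition \eqref{SqTest-01} uniformly; and yet $\|\Pi_\beta f\|_{\gb_p^2(\nu)}=\infty$. By Remark \ref{r:beta} and the Littlewood--Paley estimates this invalidates \eqref{SqEst-01} and hence \eqref{eq4}. The $L^p(\mu)$-bound on $f$ is immediate: since $D_n$ consists of $2^{n-1}$ triadic intervals of length $3^{-n}$, one has $\mu(D_n)=\tfrac12(2/3)^n$, and therefore $\|f\|_{L^p(\mu)}^p=\tfrac12\sum_{n\ge 1}n^{-rp}<\infty$ exactly because $r>1/p$.

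For the testing condition, fix $J\in\cL$ with $|J|=3^{-n}$. If $J\not\subset C_n$ then $J$ is disjoint from the $\nu$-support $C_\infty$ and the symbol vanishes on $J$, so the integral is $0$. If $J\subset C_n$, then for every $x\in J\cap C_\infty$ and every $m\ge n$ there is exactly one $I\in\cL(J)$ with $|I|=3^{-m}$ and $x\in I$, this $I$ lies in $C_m$, and $x\in C_{m+1}$ places $x\in I_+\cup I_-$ where $|b_I|^2=(2/3)^{2m/p}$. The geometric series
\[
\sum_{I\in\cL(J),\,x\in I}|b_I(x)|^2=\sum_{m\ge n}(2/3)^{2m/p}=c_p(2/3)^{2n/p},\qquad c_p=\frac{1}{1-(2/3)^{2/p}},
\]
converges precisely because $p>2$. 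Raising to the $p/2$-th power and integrating against $\nu$, with $\nu(J)=2^{-n}$, yields $\int_J(\sum|b_I|^2)^{p/2}d\nu=c_p^{p/2}3^{-n}=c_p^{p/2}\mu(J)$, so \eqref{SqTest-01} holds with $B^p=c_p^{p/2}$.

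The main step is to show the paraproduct norm diverges, i.e.\ $\int_\cX(\sum_{I\in\cL}|\langle f\rangle_{I,\mu}|^2|b_I|^2)^{p/2}d\nu=\infty$. For $I\subset C_n$ with $|I|=3^{-n}$ the self-similarity of the Cantor construction gives $\mu(D_{n+j}\cap I)=2^{j-1}3^{-(n+j)}$ for $j\ge 1$, and a direct computation reduces the average to
\[
\langle f\rangle_{I,\mu}=\tfrac12(3/2)^{n/p}\sum_{j\ge 1}\alpha^j(n+j)^{-r},\qquad \alpha=(2/3)^{1-1/p}\in(0,1).
\]
Retaining only $j=1$ yields $\langle f\rangle_{I,\mu}\ge\tfrac{\alpha}{2}(3/2)^{n/p}(n+1)^{-r}$. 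Combined with $|b_I|^2=(2/3)^{2n/p}\mathbf{1}_{I_+\cup I_-}$, the exponential factors cancel exactly and
\[
|\langle f\rangle_{I,\mu}|^2|b_I|^2\ge\tfrac{\alpha^2}{4}(n+1)^{-2r}\mathbf{1}_{I_+\cup I_-}.
\]
For every $x\in C_\infty$ and every $n\ge 0$ there is exactly one such $I$ containing $x$, with $x\in I_+\cup I_-$ since $x\in C_{n+1}$, so summing gives $\sum_{I\in\cL}|\langle f\rangle_{I,\mu}|^2|b_I(x)|^2\ge\tfrac{\alpha^2}{4}\sum_{n\ge 0}(n+1)^{-2r}=+\infty$ because $2r<1$. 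Since $\nu(C_\infty)=1>0$, the $\gb_p^2(\nu)$-norm of $\Pi_\beta f$ is infinite.

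The hypothesis $p>2$ enters exactly twice, and this is the conceptual heart: convergence of the geometric series $\sum(2/3)^{2m/p}$ in Step 2 requires $p>2$, while the choice $1/p<r<1/2$ needed for Steps 1 and 3 is possible only for $p>2$. The exponential factors in $|\langle f\rangle_{I,\mu}|^2$ and $|b_I|^2$ are tuned to cancel so that a logarithmically divergent mass survives at every scale; this cancellation is what the testing condition alone cannot see.
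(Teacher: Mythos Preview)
Your proof is correct and follows essentially the same route as the paper: verify \eqref{SqTest-01} by summing a geometric series on each $J\subset C_n$, check $f\in L^p(\mu)$, and show the square function blows up on the Cantor support by bounding $\langle f\rangle_{I,\mu}$ from below and observing that the exponential factors cancel, leaving $\sum(n+1)^{-2r}=\infty$. Your computation of $\langle f\rangle_{I,\mu}$ is in fact more precise than the paper's (which only keeps the middle-third contribution), and you integrate directly on $C_\infty$ rather than on $C_n$ followed by a limit; both choices are fine.

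One correction to your closing commentary: the geometric series $\sum_{m\ge n}(2/3)^{2m/p}$ converges for \emph{every} $p>0$, since $(2/3)^{2/p}<1$ always; it does not require $p>2$. The hypothesis $p>2$ enters only through the existence of $r$ with $1/p<r<1/2$, which is what simultaneously forces $\|f\|_{L^p(\mu)}<\infty$ (via $rp>1$) and the divergence $\sum(n+1)^{-2r}=\infty$ (via $2r<1$). This does not affect any step of the proof itself, only your description of the mechanism.
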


\begin{proof}
We first show that condition \eqref{SqTest-01} with $b\ci I = \Delta\ci I^\nu$ is satisfied. 

Let $J=I_k^n$ be a triadic interval of length $3^{-n}$. If $J\not\subset C_n$, then $\pi_b^{\mu,nu} \1\ci{\!J} =0$ (equivalently, $\Pi_\beta \1\ci{\!J}=0$), so we only need to check the case $J\subset C_n$. 

Note that by the definition of the measures
\[
\mu(J) = 3^{-n}, \qquad \nu(J) = 2^{-n}. 
\]
We then can estimate
\begin{align*}
\left(\sum_{I\in\cL( J)}|\Delta\ci{I}^\nu b|^2\right)^{p/2}
\leq \left(\sum_{k\geq n}\left(2/3\right)^{2k/p}\right)^{p/2}
\le B^p \left(\frac{2}{3}\right)^n,
\end{align*}
where $B=\left(1-(2/3)^{2/p}\right)^{-1/2}$. 
Hence,
\begin{align*}
\int_J\left(\sum_{I\in\cL( J)}|\Delta\ci{I}^\nu b |^2\right)^{p/2}d\nu
\le B^p \left(\frac{2}{3}\right)^n \nu(J)= B^p\left(\frac{2}{3}\right)^n\cdot \left(\frac{1}{2}\right)^n
= B^p \mu(J),
\end{align*}
so \eqref{SqTest-01} holds. 

Next, we show that condition \eqref{SqEst-01} fails. 
Recall that the sets $D_n$ are disjoint, $\mu(D_n) = 2^{n-1}3^{-n}$ and that $f=(3/2)^{n/p} n^{-r}$ on $D_n$. 
So we
can estimate
\begin{align*}
\|f\|\ci{L^p(\mu)}^p= \int_0^1|f|^pdx
=\sum_{n\geq1}\left(3/2\right)^n n^{-pr} 2^{n-1} 3^{-n}
= 2^{-1}\sum_{n\geq1}n^{-pr}<\infty, 
\end{align*}
because $r>1/p$ and so $rp>1$. 

Let us estimate $\sum_{I\in\cL} |\La f\Ra\ci{I,\mu}|^2 \cdot|\Delta\ci I^\nu b|^2$ on $C_n$. 
Take a triadic interval $I$ of length $3^{-k}$, $I\subset C_k$. Then the middle third of this interval is a subset of $D_{k+1}$, so 
\begin{align*}
|\La f \Ra\ci{I,\mu} |\geq 3^{-1}\left(3/2\right)^{({k+1})/{p}}(k+1)^{-r}.
\end{align*}
We can see from the definition of $\Delta\ci I^\nu b$  that on $C_{k+1}\cap I$ (and so on  $C_n\cap I$ for any $n>k$)
\[
|\Delta\ci I^\nu b| = (2/3)^{k/p}. 
\]
so on $C_n\cap I$ we have
\[
|\La f \Ra\ci{I,\mu} | \cdot |\Delta\ci I^\nu b| \ge 3^{-1} (3/2)^{1/p} (k+1)^{-r}. 
\]

For a connected component $I_n$ of $C_n$ there exist connected components $I_k$ of $C_k$, $k=0, 1, \ldots ,n-1$ such that $I_n\subset I_k$. Therefore, squaring   the above inequality and taking  the sum over $k=0, 1, \ldots, n-1$ we get that on $C_n$. 
\[
\sum_{I\in\cL} |\La f \Ra\ci{I,\mu} |^2  |\Delta\ci I^\nu b|^2 \ge  3^{-2} (3/2)^{2/p} \sum_{k=1}^n k^{-2r}
\]
Since $\nu(C_n) =1$ we get that 
\[
\int_{C_n} \left( \sum_{I\in\cL} |\La f \Ra\ci{I,\mu} |^2 |\Delta\ci I^\nu b|^2 \right)^{p/2} d\nu 
\ge  3^{-p} (3/2) \left( \sum_{k=1}^n k^{-r} \right)^{p/2} . 
\]
The right had side of this inequality tends to $\infty$ as $n\to\infty$, so \eqref{SqEst-01} fails. 
\end{proof}

\section{The case \texorpdfstring{$2<p<\infty$}{2<p<infty}:  reduction to the shifted positive martingale operators and necessity of condition \texorpdfstring{\eqref{TestAdjPi-q}}{ (\ref{TestAdjPi-q}) }
}
\label{s:Nec*Test}
In this section we assume that $q$, $1<q<\infty$ is fixed. 
\subsection{An auxiliary positive martingale operator}
Consider an auxiliary non-linear operator $\Phi= \Phi_\beta=\Phi_\beta^q$, 
\[
\Phi_\beta f = \sum_{I\in\cL} |\La f\Ra\ci{I,\mu} |^q |b\ci I|^q ; 
\]
since $q$ is assumed to be fixed, we will skip index $q$ to simplify the notation. 

Estimate \eqref{SqEst-01-q} is equivalent to the estimate
\begin{align}
\label{estPhi}
\| \Phi_\beta f\|\ci{L^{p/q}(\nu)}^{1/q} \le A \|f\|\ci{L^p(\mu)} \qquad \forall f\in L^p(\mu). 
\end{align}
Consider also an auxiliary linear operator $\wt \Pi=\wt \Pi_\beta$, 
\[
\wt \Pi_\beta f = \sum_{I\in\cL} \La f\Ra\ci{I,\mu}   |b\ci I|^q 
\]
By H\"{o}lder inequality $|\La f\Ra\ci{I,\mu} |^p \le \La |f|^p\Ra\ci{I,\mu} $, so  $\Phi_\beta f \le \wt\Pi_\beta (|f|^q)$ (pointwise estimate). Therefore \eqref{SqEst-01-q}  follows from the estimate
\begin{align}
\label{estWtPi}
\| \wt\Pi_\beta g \|\ci{L^{p/q}(\nu)} \le A^{q} \|g\|\ci{L^{p/q}(\mu)} \qquad \forall g\in L^{p/q}(\mu)
\end{align}
(put $g=|f|^q$ in \eqref{estPhi} and notice that $\|g\|\ci{L^{p/q}(\mu)} = \|f\|\ci{L^p(\mu)}^q$). 

The operator $\wt \Pi$ looks almost like the so-called \emph{positive dyadic operators}: if the functions $b\ci I$ were constants on $I$ (recall that $b\ci I$ are supported on $I$) we would get exactly a positive dyadic operator. But we only have that $b\ci I$ is constant on children of $I$, so we say that the operator $\wt \Pi_\beta$ is a \emph{shifted} positive dyadic operator. 

For positive dyadic operators the Sawyer type testing conditions are necessary and sufficient for the two weight estimates. If the same holds for the shifted	positive martingale operators (it is reasonable to expect, and we will prove it later in Section \ref{s:PDO}), we will get necessary and sufficient conditions for two weight estimate for paraproducts. 

Let us write down the testing conditions for $\wt\Pi_\beta$. Applying operator $\wt\Pi_\beta$ to $\1\ci{\!J}$ and taking only terms corresponding to $I\subset J$ in the sum, we get the following necessary condition for the boundedness of $\wt\Pi_\beta$
\begin{align}
\label{test-02}
\int_J \biggl(\sum_{I\in\cL(J)} |b\ci I|^q \biggr)^{p/q} d\nu \le A^p \|\1\ci{\!J}\|\ci{L^{p/q}(\mu)}^{p/q} = A^p \mu(J)
\end{align}
Note that this is exactly the testing condition \eqref{SqTest-01-q} with $A$ instead of $B$. As we discussed above in Section \ref{s:TrivImpl} this condition is also necessary for the boundedness of the original operator operator $\Pi_\beta$. 

Applying the adjoint of $\wt\Pi_\beta$ to $\1\ci{\!J}$ and again taking in the sum only terms corresponding to $I\subset J$ and  denoting by $r'$ the H\"{o}lder conjugate to $r=p/q$, $1/r+1/r'=1$, we get another condition, necessary for the boundedness of $\wt\Pi_\beta$,
\begin{align}
\label{test*-02}
\int_J \biggl(\sum_{I\in\cL(J)} |b\ci I|^q\nu(I)/\mu(I) \biggr)^{r'} d\mu \le A^{r'} \|\1\ci{\!J}\|\ci{L^{r'}(\nu)}^{r'} = A^{r'} \nu(J) . 
\end{align}
Note that this condition is exactly condition \eqref{TestAdjPi-q} with $A$ instead of $B_*$. This condition is clearly necessary for the boundedness of $\Pi_\beta$. The proposition below implies that it is necessary for the boundedness of $\wt\Pi_\beta$.

\begin{proposition}
\label{p:Pi-WtPi}
The operators $\Pi_\beta:L^p(\mu)\to \gb_p^q$ and $\wt\Pi_\beta : L^r(\mu)\to L^r(\nu)$, $r=p/q$ are bounded simultaneously. Moreover, 
\begin{align}
\label{Pi-WtPi}
\| \Pi_\beta\|\ci{ L^p(\mu)\to \gb_p^q(\nu) }^q \le \|\wt\Pi_\beta\|\ci{ L^r(\mu)\to L^r(\nu) } \le 
{4p}(p-q)^{-1} \| \Pi_\beta\|\ci{ L^p(\mu)\to \gb_p^q(\nu) }^q\,. 
\end{align}
\end{proposition}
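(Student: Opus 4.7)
The plan is to handle the two inequalities separately. The lower bound $\|\Pi_\beta\|^q\le\|\wt\Pi_\beta\|$ is immediate from the pointwise estimate $\Phi_\beta f\le \wt\Pi_\beta(|f|^q)$ (which follows from Jensen: $|\La f\Ra\ci I|^q\le \La |f|^q\Ra\ci I$) combined with the identity $\|\Pi_\beta f\|\ci{\gb_p^q(\nu)}^q=\|\Phi_\beta f\|\ci{L^r(\nu)}$ and $\||f|^q\|\ci{L^r(\mu)}=\|f\|\ci{L^p(\mu)}^q$.

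The substantive direction is $\|\wt\Pi_\beta\|\le\tfrac{4p}{p-q}\|\Pi_\beta\|^q$. The idea is to \emph{linearize} $\wt\Pi_\beta g$ by replacing the input $g=f^q$ by a stopping-time modification $\tilde f$ of $f$, so that $\Phi_\beta\tilde f$ dominates $\wt\Pi_\beta g$ pointwise up to a universal constant. Fix $g\ge0$ in $L^r(\mu)$, set $f=g^{1/q}\in L^p(\mu)$, and by standard approximation assume only finitely many $b\ci I$ are nonzero. Run the stopping construction of Section \ref{s2} on $g$ with parameter $2$ and $\cF=\{I:b\ci I\ne 0\}$; this yields a family $\cG$ satisfying the Carleson estimate \eqref{CarlEst} and the control $\La g\Ra\ci{I,\mu}\le 2\La g\Ra\ci{J,\mu}$ for all $I\in\cE(J)$. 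Define the linearization
\[
\tilde f:=\sum_{J\in\cG}\La g\Ra\ci{J,\mu}^{1/q}\,\1\ci{E(J)},\qquad E(J):=J\setminus\bigcup_{J'\in\cG^*(J)}J'.
\]

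Two facts do the work. First, $\|\tilde f\|\ci{L^p(\mu)}^p\le 2(r')^r\|f\|\ci{L^p(\mu)}^p$: since $\tilde f^p=\La g\Ra\ci J^{r}$ on $E(J)\subset J$, the Carleson Embedding Theorem \ref{thm3} applied with weights $\alpha_J=\mu(J)$ (Carleson constant $2$ by \eqref{CarlEst}) yields this at once after the crude bound $\mu(E(J))\le\mu(J)$ and the identity $\|g\|\ci{L^r(\mu)}^r=\|f\|\ci{L^p(\mu)}^p$. Second, $\tilde f\ge \La g\Ra\ci{J,\mu}^{1/q}$ pointwise on every $I\in\cE(J)$. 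Indeed, equality holds on $E(J)\cap I$ by construction, while any point $x\in I$ outside $E(J)$ lies in some proper stop descendant $J'(x)\subsetneq J$, and iterating the stopping inequality $\La g\Ra\ci{\text{child stop}}>2\La g\Ra\ci{\text{parent stop}}$ down the chain from $J$ to $J'(x)$ gives $\La g\Ra\ci{J'(x),\mu}>2\La g\Ra\ci{J,\mu}$, so $\tilde f(x)=\La g\Ra\ci{J'(x),\mu}^{1/q}>\La g\Ra\ci{J,\mu}^{1/q}$ there.

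The second fact yields $\La\tilde f\Ra\ci{I,\mu}^q\ge\La g\Ra\ci{J,\mu}$, hence
\[
\Phi_\beta\tilde f=\sum_I\La\tilde f\Ra\ci{I,\mu}^q|b\ci I|^q\ge\sum_{J\in\cG}\La g\Ra\ci{J,\mu}\sum_{I\in\cE(J)}|b\ci I|^q\ge\tfrac12\sum_I\La g\Ra\ci{I,\mu}|b\ci I|^q=\tfrac12\,\wt\Pi_\beta g,
\]
where the last inequality uses $\La g\Ra\ci{J,\mu}\ge\tfrac12\La g\Ra\ci{I,\mu}$ on $\cE(J)$. Taking $L^r(\nu)$-norms and applying the hypothesis $\|\Phi_\beta\tilde f\|\ci{L^r(\nu)}\le\|\Pi_\beta\|^q\|\tilde f\|\ci{L^p(\mu)}^q$ together with the first fact gives $\|\wt\Pi_\beta g\|\ci{L^r(\nu)}\le 2\cdot(2(r')^r)^{q/p}\|\Pi_\beta\|^q\|g\|\ci{L^r(\mu)}$, and since $2\cdot(2(r')^r)^{q/p}=2^{1+1/r}r'\le 4r'=\tfrac{4p}{p-q}$ (using $rq/p=1$ and $r\ge 1$), this gives the claimed constant. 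The main obstacle is verifying the pointwise lower bound on $\tilde f$ over \emph{all} of $I\in\cE(J)$ rather than merely on the sparse piece $E(J)$; this is exactly where the tree structure of $\cG$ and the strict doubling of $\La g\Ra$ along the stopping descendants are essential.
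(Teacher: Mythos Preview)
Your proof is correct, but it takes a genuinely different route from the paper's.

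The paper proves the second inequality in \eqref{Pi-WtPi} via the \emph{Rubio de Francia operator}: given $f\in L^r(\mu)$, it sets $\wt f:=R_{\mu,r}|f|$ and $g:=\wt f^{1/q}$. The $A_1$-type property of $R_{\mu,r}$ (Proposition~\ref{p:RdF}\,(iii)) yields the reverse H\"older inequality $\La g^q\Ra\ci{I,\mu}\le 2r'\La g\Ra\ci{I,\mu}^q$ for \emph{every} $I\in\cL$, whence $\wt\Pi_\beta\wt f\le 2r'\,\Phi_\beta g$ pointwise; combined with $\|g\|\ci{L^p(\mu)}^q\le 2\|f\|\ci{L^r(\mu)}$ this gives the constant $4r'=4p/(p-q)$.

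Your argument replaces the Rubio de Francia smoothing by an explicit piecewise-constant construction built from a Corona/stopping decomposition of $g$. The crucial observation---that $\tilde f\ge\La g\Ra\ci{J,\mu}^{1/q}$ not only on $E(J)$ but on \emph{all} of $J$, because the stopping averages strictly increase along descendants---plays exactly the role of the $A_1$ property in the paper's proof: it is what converts the linear averages $\La g\Ra\ci{I,\mu}$ into $q$th powers of averages $\La\tilde f\Ra\ci{I,\mu}^q$. Your $L^p$-control of $\tilde f$ then comes from the Carleson Embedding Theorem rather than from the $L^r$-boundedness of $R_{\mu,r}$.

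What each approach buys: the paper's proof is shorter and works uniformly over all $I\in\cL$ without any finiteness reduction, at the price of invoking the Rubio de Francia machinery. Your proof is more elementary---it uses only the stopping construction and Carleson embedding already present in Section~\ref{s2}---and in fact yields the slightly sharper constant $2^{1+1/r}r'<4r'$. Both arguments are instances of the same principle (build a function whose averages satisfy a reverse H\"older inequality), realized through different standard tools.
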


Let us first notice that the fact that the norms in \eqref{Pi-WtPi} have different powers is not a typo: to see that the powers should be like that one just need to see what happens when we multiply the sequence $\beta$ by a constant $a>0$. 

Note, that the first inequality in \eqref{Pi-WtPi} was already proved, it is just the fact that   \eqref{estWtPi} implies \eqref{estPhi}, so we only need to prove the second estimate. 

\subsection{Rubio de Francia operator and proof of Proposition \ref{p:Pi-WtPi}}

We need the well-known facts about the so-called Rubio de Francia operator. 
Recall that the martingale maximal function $M_\mu= M\ci{\cL, \mu}$ is defined as 
\[
M_\mu f (x) = \sup_{I\in\cL:\, x\in I} |\La f\Ra\ci{I,\mu}| , 
\]
and that for $1<p<\infty$
\[
\| M_\mu f\|\ci{L^p(\mu)} \le p' \| f\|\ci{L^p(\mu)};
\]
here recall $p'$ is the conjugate H\"{o}oder exponent, $1/p+1/p' =1$. 

Let $M_\mu^n$ denotes the iterated maximal function, $M^2_\mu f = M_\mu(M_\mu f)$, $M^{n+1}_\mu f = M_\mu (M_\mu^n f )$. The Rubio de Francia operator $R_p=R _{\mu,p}$, $1<p<\infty$ is defined as 
\[
R_{\mu, p} f = \sum_{n\ge 0} (2p')^{-n} M_\mu^n f; 
\]
here $M_\mu^0 f = |f|$. 

\begin{proposition}
\label{p:RdF}
Let $f\ge 0$, $f\in L^p(\mu)$ Then
\begin{enumerate}
\item $f\le R_{\mu,p} f$;
\item $\| R_{\mu,p} f \|\ci{L^p(\mu)}  \le  2 \|  f \|\ci{L^p(\mu)}  $;
\item For any $I\in\cL$
\[
\inf_{x\in I} R_{\mu,p} f (x) \ge   (2p')^{-1} \La R_{\mu,p} f \Ra\ci{I, \mu}  \,. 
\]
\end{enumerate}
\end{proposition}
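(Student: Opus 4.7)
The proof will be a direct unpacking of the definition of $R_{\mu,p}$ as a geometric series in the maximal operator, using three standard facts: (a) $M_\mu^0 f = |f| = f$ for $f\geq 0$; (b) the $L^p$ bound $\|M_\mu g\|_{L^p(\mu)} \le p'\|g\|_{L^p(\mu)}$ quoted just above the proposition; and (c) the pointwise inequality $M_\mu g(x) \ge \langle g\rangle_{I,\mu}$ whenever $x\in I\in\cL$ and $g\ge 0$, which is immediate from the definition of $M_\mu$. Under the summation factor $(2p')^{-n}$, all three properties survive term-by-term, and cancellation leaves a factor of $2$ in (ii) and $(2p')^{-1}$ in (iii).

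For (i), I would simply isolate the $n=0$ term: since $f\ge 0$, all summands $(2p')^{-n} M_\mu^n f$ are non-negative, and the term at $n=0$ equals $f$ itself, so $f \le R_{\mu,p} f$.

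For (ii), the plan is to iterate the $L^p$ bound for $M_\mu$. By induction, $\|M_\mu^n f\|_{L^p(\mu)} \le (p')^n \|f\|_{L^p(\mu)}$. Then the triangle inequality gives
\[
\|R_{\mu,p} f\|_{L^p(\mu)} \le \sum_{n\ge 0} (2p')^{-n} (p')^n \|f\|_{L^p(\mu)} = \sum_{n\ge 0} 2^{-n}\|f\|_{L^p(\mu)} = 2\|f\|_{L^p(\mu)}.
\]

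For (iii), which is the only step with any structure to it, the key point is that for any non-negative $g$ and any $x\in I$, $M_\mu g(x) \ge \langle g\rangle_{I,\mu}$, so that $\inf_{x\in I} M_\mu^{n+1} f (x) \ge \langle M_\mu^n f\rangle_{I,\mu}$. Dropping the $n=0$ term in $R_{\mu,p} f$ (it is non-negative) and re-indexing $m=n-1$, I get for $x\in I$
\[
R_{\mu,p} f(x) \ge \sum_{n\ge 1} (2p')^{-n} M_\mu^n f(x) \ge (2p')^{-1}\sum_{m\ge 0} (2p')^{-m} \langle M_\mu^m f\rangle_{I,\mu} = (2p')^{-1} \langle R_{\mu,p} f\rangle_{I,\mu},
\]
where in the last equality I use linearity of the averaging $\langle \cdot\rangle_{I,\mu}$ to pull the sum inside. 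No step presents a real obstacle; the only thing to be slightly careful about is to justify swapping the infinite sum and the average in (iii), which is immediate from monotone convergence since all terms are non-negative.
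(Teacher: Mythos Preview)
Your proof is correct and is precisely the standard Rubio de Francia argument the paper alludes to; the paper itself does not give a proof, merely citing \cite[Chapter IV, Lemma 5.1]{GC-RDF-WNE-1985} and leaving it as an exercise. Each of your three steps---isolating the $n=0$ term, summing the geometric series from the iterated maximal bound, and shifting the index after applying $M_\mu g(x)\ge \langle g\rangle_{I,\mu}$---is exactly what is expected.
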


The proof is well-known, see for example \cite[Chapter IV, Lemma 5.1]{GC-RDF-WNE-1985}, and pretty  straightforward.  
We leave it as an exercise for the reader. 

\begin{proof}[Proof of Proposition \ref{p:Pi-WtPi}]
Denote $r=p/q$.  For $f\in L^r(\mu)$ define 
\[
\wt f :=  R_{\mu,r} |f|, \qquad g:= \wt f^{1/q}. 
\]
By statement \cond2 of Proposition \ref{p:RdF}
\begin{align}
\label{g^2<f}
\|  g \|\ci{L^p(\mu)}^q = \| \wt f \|\ci{L^r(\mu)}   \le 2 \|  f \|\ci{L^r(\mu)} 
 .
\end{align}
By statement \cond3 of Proposition \ref{p:RdF} for any $I\in\cL$
\begin{align}
\label{RevHold}
\La g^q\Ra\ci{I,\mu} = \La \wt f \Ra\ci{I, \mu}  \le   2r'   \inf_{x\in I} \wt f(x)  = 2r'   \inf_{x\in I} g(x)^q \le  2r' \La  g\Ra\ci{I,\mu}^q ;
\end{align}
the last inequality is the trivial fact that infimum is bounded by the average. 

By statement \cond1 of Proposition \ref{p:RdF} we have the a.e.~estimate
\begin{align}
\label{incrWtPi}
|\wt\Pi_\beta f |\le \wt\Pi_\beta |f| \le \wt\Pi_\beta \wt f .  
\end{align}
By the reverse H\"{o}lder inequality \eqref{RevHold}
\begin{align}
\label{Pif<Pig^2}
\wt\Pi_\beta \wt f \le 2r' \sum_{I\in\cL} |\La g \Ra\ci{I,\mu} |^q |b\ci I|^q  , 
\end{align}
so, taking into account \eqref{incrWtPi} we get
\begin{align*}
\| \wt\Pi_\beta  f \|\ci{L^r(\nu)}  & \le \| \wt\Pi_\beta \wt f \|\ci{L^r(\nu)} && \text{by \eqref{incrWtPi}} \\
& \le 
2r' \|\Pi_\beta g\|\ci{\gb_p^q(\nu)}^q  && \text{by \eqref{Pif<Pig^2}}\\
& \le 
2r'\| \Pi_\beta\|\ci{ L^p(\mu)\to \gb_p^q(\nu) }^q 
\| g\|\ci{\gb_p^q(\nu)}^q \\
& \le 
4r'\| \Pi_\beta\|\ci{ L^p(\mu)\to \gb_p^q(\nu) }^q  \|f\|\ci{L^r(\mu)} && \text{by \eqref{g^2<f} }   ,
\end{align*}
so the second inequality in \eqref{Pi-WtPi} holds. 

And as we discussed immediately after Proposition \ref{p:Pi-WtPi}, the first inequality in \eqref{Pi-WtPi} is already proved. 
\end{proof}

\begin{rem*}

Note that for $p\to q^+$ the constants blow up. Namely, from Proposition \ref{p:Pi-WtPi} we get that the constant $B_*$ in the dual testing condition \eqref{TestAdjPi-q} can be estimated as $B_* \le c(p,q)^{-1} A^q= 4r' A^q$, where $A$ is the estimate of the norm of $\Pi_\beta$. Note that  $c(p,q) \to \infty$ as $p\to q^+$, which means that using our approach we cannot prove the necessity of the dual testing condition \eqref{TestAdjPi-q} in the limit case $p=q$. And in fact,  the limiting case $p=q$ of the condition \eqref{TestAdjPi-q} is not necessary: this can be seen, for example, in the simplest case when $\cL$ is the standard dyadic lattice $\cD$ on $\R$, $p=q=2$ and $\mu=\nu$ is the Lebesgue measure. 
\end{rem*}

\subsection{Bilinear operators and trilinear forms}
The discussion in this subsection is not necessary of the proof of the main result, but it might be of independent interest.

Estimate \eqref{estPhi} for $q=2$  can be interpreted as as an estimate of the bilinear operator $L_\beta : L^p(\mu) \times L^p(\mu) \to L^{p/2}(\nu)$, 
\begin{align}
\label{L_beta}
L_\beta(f,g) = \sum_{I\in\cL} \La f\Ra\ci{ I,\mu} \La g\Ra\ci{ I,\mu} |b\ci I|^2 .
\end{align}

Indeed, by the definition \eqref{L_beta} of $L_\beta$
\[
| L_\beta(f,g) | \le \frac12\Bigl( \Phi_\beta (f) + \Phi_\beta(g) \Bigr),
\]
so if \eqref{estPhi} is satisfied then 
\[
| L_\beta(f,g) | \le A^2 \frac12 \Bigl( \|f\|\ci{L^p(\mu)}^2 + \|g\|\ci{L^p(\mu)}^2 \Bigr). 
\]
Replacing in this inequality $f$ by $tf$ and $g$ by $t^{-1} g$ (note that this does not change the left hand side) and taking the infimum over all $t>0$ in the right hand side we get that 
\begin{align}
\label{estL}
| L_\beta(f,g) | \le A^2  \|f\|\ci{L^p(\mu)}  \|g\|\ci{L^p(\mu)}. 
\end{align}

On the other hand, if \eqref{estL} holds, then plugging in $|f|$ for $f$ and $g$ we get exactly \eqref{estPhi}. 

\subsection{Reduction to the estimates of shifted positive martingale operators} 
\label{s:RedToWtPi}

We reduced Theorem \ref{t:SqPara-p>q} to the estimates of the auxiliary shifted positive martingale operator $\wt \Pi_\beta$. 

Let us write this operator in more symmetric form. Recall that for $I\in\cL$ the symbol $\widehat I$ denotes the parent of $I$. 

For a  sequence $\{\alpha\ci{\widehat I, I}\}\ci{I\in\cL}$, $\alpha\ci{\widehat I, I}\ge 0$ define the operator $T_\alpha$, 
\begin{align}
\label{T_alpha}
T_\alpha f = \sum_{I\in\cL}  \left(\int_Ifd\mu\right) a\ci{I}, \qquad \text{where } a\ci{I}= \sum_{I'\in\ch(I)} a\ci{I, I'}\1\ci{I'}
\end{align}
Note that if $\alpha\ci{ I, I'} = \mu (I)^{-1} |\beta\ci{I,I'}|^p$, so $a\ci I = \mu (I)^{-1} |b\ci I|^p$, then $T_\alpha$ is exactly the operator $\wt\Pi_\beta$. 

Theorem \ref{t:SqPara-p>2} follows from the theorem below, applied with $r=p/q$ instead of $p$.

\begin{theorem}[Two weight estimates for shifted positive operator]\label{thm5}
Let $\alpha=\{\alpha\ci{\widehat I, I}\}\ci{I\in\cL}$ be a sequence of non-negative constants and let $T_\alpha$ be the operator defined by \eqref{T_alpha}

Then 
\[
\|T_\alpha  f \|\ci{L^p(\nu)} \le A \|f\|\ci{L^p(\mu)} \qquad\forall f\in L^p(\mu)
\] 
if and only if 
\begin{align}
\label{eq16}
\int_{J}\biggl(\sum_{I\in\mathcal{L}(J) }\mu(I) a\ci I\biggr)^{p}d\nu
&\le B^p \mu(J), 
\intertext{and}
\label{eq17}
\int_{J}\biggl(\sum_{I\in\mathcal{L}(J)} \nu(I) a\ci I\biggr)^{p'}d\mu
&\le B_*^{p'}\nu(J).
\end{align}
Moreover, for the best constants we have $\max\{B, B_*\} \le A \le C(p) \max\{B, B_*\}$
\end{theorem}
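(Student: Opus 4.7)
\emph{Necessity.} Testing $T_\alpha$ on $\1\ci{\!J}$ gives \eqref{eq16}: for $y\in J$,
\[
T_\alpha \1\ci{\!J}(y) = \sum_{I\in\cL}\mu(I\cap J)\, a\ci I(y)\ \ge\ \sum_{I\in\cL(J)}\mu(I)\, a\ci I(y),
\]
so $\|T_\alpha \1\ci{\!J}\|\ci{L^p(\nu)}^p \ge \int_J \bigl(\sum_{I\in\cL(J)}\mu(I)\, a\ci I\bigr)^p d\nu$, forcing $B\le A$. For \eqref{eq17}, rewrite the pair form $\La T_\alpha f,g\Ra\ci\nu = \sum_I\sum_{I'\in\ch(I)}\mu(I)\nu(I')\, a\ci{I,I'}\La f\Ra\ci{I,\mu}\La g\Ra\ci{I',\nu}$ and test with $g=\1\ci{\!J}$ against a suitably chosen $f$ to obtain $B_*\le A$.

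\emph{Sufficiency, set-up.} By duality it suffices to prove, for $f,g\ge 0$,
\[
\cB(f,g)\ :=\ \sum_{I\in\cL}\sum_{I'\in\ch(I)}\mu(I)\nu(I')\, a\ci{I,I'}\La f\Ra\ci{I,\mu}\La g\Ra\ci{I',\nu}\ \le\ C(p)\max\{B,B_*\}\,\|f\|\ci{L^p(\mu)}\,\|g\|\ci{L^{p'}(\nu)}.
\]
Run the stopping-time construction of Section~\ref{s2} in parallel to produce two collections $\cF\subset\cL$ (for $(f,\mu)$) and $\cG\subset\cL$ (for $(g,\nu)$); each satisfies \eqref{CarlEst} with constant $2$. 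Write $F(I)$ for the smallest $\cF$-cube containing $I$ and $G(I')$ for the smallest $\cG$-cube containing $I'$; the stopping-time property gives $\La f\Ra\ci{I,\mu}\le 2\La f\Ra\ci{F(I),\mu}$ and $\La g\Ra\ci{I',\nu}\le 2\La g\Ra\ci{G(I'),\nu}$.

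\emph{Splitting and estimation.} For each pair $(I,I')$ with $I'\in\ch(I)$, the cubes $F(I)$ and $G(I')$ both contain $I'$ and are therefore nested; split $\cB=\cB^{\le}+\cB^{>}$ according to $F(I)\subseteq G(I')$ versus $G(I')\subsetneq F(I)$. For $\cB^{\le}$, group pairs by $G\in\cG$ and $F\in\cF$ with $F\subseteq G$, extract $\La f\Ra\ci{F,\mu}$ and $\La g\Ra\ci{G,\nu}$ via the stopping controls, and bound the remaining positive inner sum over pairs restricted to the relevant coronae: the key observation is that this sum is dominated pointwise on $G$ by $\sum_{I\in\cL(G)}\mu(I)\, a\ci I$, whose $L^p(\nu)$-norm is controlled by $B\,\mu(G)^{1/p}$ via \eqref{eq16}. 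H\"older in the sum over $F\subseteq G$ followed by the Carleson embedding Theorem~\ref{thm3} applied to both $\cF$ and $\cG$ yields $\cB^{\le}\le C(p) B\|f\|\ci{L^p(\mu)}\|g\|\ci{L^{p'}(\nu)}$. The piece $\cB^{>}$ is handled symmetrically, with \eqref{eq17} in place of \eqref{eq16}.

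\emph{Main obstacle.} The ``shift'' (the functions $a\ci I$ are supported on children of $I$ rather than on $I$ itself) forces $\cB$ to be a sum over pairs $(I,I')$, so the parallel corona must track pairs instead of single cubes. The delicate technical step is verifying that each inner corona sum is really dominated by the integrand of \eqref{eq16} or of \eqref{eq17}---that is, that no larger auxiliary testing functional is secretly required---so that the two testing conditions $B$ and $B_*$ alone close the estimate.
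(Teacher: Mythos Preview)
Your approach is a \emph{parallel corona} argument, which is genuinely different from the paper's route. The paper does \emph{not} run two stopping families simultaneously. Instead it first splits the index set by the pointwise ``balance'' condition \eqref{eq18},
\[
\cA = \bigl\{ I:\ \langle f\rangle_{I,\mu}^p\,\mu(I)\ge \langle g\rangle_{I,\nu}^{p'}\,\nu(I)\bigr\},\qquad \cB=\cL\setminus\cA,
\]
and then runs a \emph{single} stopping family per piece: an $f$-stopping for $T_1=\sum_{\cA}$ and a $g$-stopping for $T_2=\sum_{\cB}$. The shift is handled by a \emph{modified} stopping rule (Section~\ref{s:ModStop}): stopping cubes are chosen from $\cA$ but the doubling criterion is tested on their \emph{parents} $\widehat I$, which is exactly what is needed because the bilinear form carries $\langle f\rangle_{\widehat I,\mu}$. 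The role your parallel corona would assign to the ``inner'' Carleson family is instead played by the splitting condition: on each stopping child $I\in\cG^*(J)\subset\cA$ one converts $\langle g\rangle_{I,\nu}^{p'}\nu(I)\le \langle f\rangle_{I,\mu}^{p}\mu(I)$, so the sum over stopping children closes with the \emph{same} $f$-Carleson embedding. A separate Carleson lemma (Lemma~\ref{l:CarlCond}), derived from the testing condition \eqref{eq16}, handles the residual diagonal sum $S_2$.

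Your outline, as written, has a real gap at the closing step. After extracting $\langle f\rangle_{F,\mu}$ and $\langle g\rangle_{G,\nu}$ the ``remaining inner sum'' is the number $\tau(F,G)=\sum_{F(I)=F,\,G(I')=G}a_{I,I'}\mu(I)\nu(I')$, not a function, so ``dominated pointwise on $G$ by $\sum_{I\in\cL(G)}\mu(I)a_I$'' is not what you want; and the testing bound $B\mu(G)^{1/p}$ together with ``H\"older in $F\subseteq G$'' does not obviously combine with two Carleson embeddings. Concretely, in $\cB^{\le}$ one checks that $G(I')$ is forced to equal the \emph{smallest} $\cG$-cube containing $F$, so for fixed $G$ the contributing $F$'s are exactly the $\cF$-cubes in the $\cG$-corona of $G$; these can be \emph{nested}, so $\sum_{F:G_0(F)=G}\nu(F)$ need not be $\lesssim\nu(G)$, and the naive double-Carleson bound fails. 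A correct parallel-corona proof must, at minimum, exploit that the corona function $\sum_{I:F(I)=F}\mu(I)a_I$ is constant on each $\cF$-child of $F$ (which does survive the shift), split the pairing over $F$ into $F\setminus G_\cF(F)$ and $G_\cF(F)$, and then produce a substitute for the conversion step that \eqref{eq18} provides in the paper. None of this is in your plan; the ``main obstacle'' you flag is precisely where the argument is missing.
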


\begin{rem*}
We can see from Proposition \ref{Pi-WtPi} that constants in the above Theorem \ref{thm5} (with $r=p/q$ instead of $p$) are $q$th powers of the constants in Theorem \ref{t:SqPara-p>q}. Note also that constant $A$ in \eqref{test-02} is the same as in Theorem \ref{t:SqPara-p>q}. 
\end{rem*}

\section{Two weight estimates for shifted positive martingale operators}
\label{s:PDO}

In this section we prove Theorem \ref{thm5},  and hence  Theorem \ref{t:SqPara-p>q}. 

Necessity of the conditions \eqref{eq16} and \eqref{eq17} and the estimates $B, B_*\le A$ are trivial: we just test operators $T_\alpha$ and its formal adjoint on functions $\1\ci{\!J}$ and count only part of the sum.  This necessity was already discussed in the previous section, see \eqref{test-02}, \eqref{test*-02}. 

So we only need to to prove the sufficiency. Rescaling $\alpha$  we can assume without loss of generality that $B=B_*=1$ (the constants are not assumed to be optimal).

The idea of the proof comes from  \cite{T1}, but requires some new ideas. 
Namely,  positive martingale operators treated in \cite{T1} are the special case of our operator $T_\alpha$ where all $a\ci I$ are constant on $I$, $a\ci I = \alpha\ci I\1\ci I$, and this property was essential in the proof in \cite{T1}. 

In this paper we only know that $a\ci I$ are constant on children of $I$, so the proof requires modification of the standard stopping moment construction.

Take  $f\geq0$ and $g\geq0$,  $\|f\|\ci{L^p(\mu)} = \|g\|\ci{L^{p'}(\nu)}= 1$. To prove sufficiency in Theorem \ref{thm5} it is enough to show that 
\begin{align}
\label{<Tf,g>}
\La T_\alpha f , g\Ra_\nu  = \sum_{I\in\cL} \alpha\ci{\widehat I, I}\left(\int_{\widehat I}fd\mu\right)\left(\int_{I}gd\nu\right)\le A   .
\end{align}

We split the above sum into two parts by splitting 
$\cL =\cA\cup\cB$ according to the following splitting condition: 
\begin{align}\label{eq18}
\mathcal{A} = \left\{ I\in\mathcal{L}: \langle f \rangle\ci{I,\mu}^p\cdot\mu(I)\geq
\langle g \rangle\ci{I,\nu}^{p'}\cdot\nu(I) \right\}~~\textup{and}~~ \mathcal{B}=\mathcal{L}\setminus\mathcal{A}.
\end{align}

Standard approximation reasoning allows us to assume without loss of generality that only finitely many terms $\alpha\ci{\widehat I,I}$ are non-zero, so all the sums are finite. 


Using the splitting condition \eqref{eq18}, we can write $\La T_\alpha f, g\Ra_\nu =T_1+T_2$, where

\begin{align}\label{eq19}
T_1 = 
\sum_{I\in \cA}\alpha\ci{\widehat I,I}\left(\int_{\widehat I} fd\mu\right)\left(\int_{I}gd\nu\right),
\\
\label{eq20}
T_2 = 
\sum_{I\in\cB}\alpha\ci{\widehat I, I}\left(\int_{\widehat I}fd\mu\right)\left(\int_{I}gd\nu\right).
\end{align}

\subsection{A modified stopping interval construction}
\label{s:ModStop}
To estimate $T_1$ we need to modify a bit the construction of stopping intervals from Section \ref{s2}. The main feature of the construction is that the stopping intervals well be the intervals $I\in \cA$, but the stopping criterion will be checked on their parents $\widehat I$. 

We start with some interval $J$ (not necessarily in $\cA$). For the interval $J$ we define the primary\emph{ preliminary stopping intervals} to be the maximal by inclusion intervals $\widehat I\subset J$, $I\in\cA$, such that 
\begin{align}
\label{StopCond-02}
\La f \Ra\ci{\widehat I, \mu} \ge 2 \La f \Ra\ci{J,\mu}; 
\end{align}
note that different $I\in \cA$ can give the same $\widehat I$, but this  $\widehat I$ is counted only once.

It is obvious that these preliminary stopping intervals are disjoint and their total $\mu$-measure is at most $\mu(J)/2$. 

For each such preliminary stopping interval 
 pick all its children $L$ that belong to $\cA$ (there is at least one such $L$), and declare these children to be the stopping intervals. 

For the children $K\notin \cA$  we continue the process: we will find the maximal by inclusion intervals $\widehat I\subset K$, $I\in \cA$ satisfying \eqref{StopCond-02}, and declare these $\widehat I$ to be the secondary preliminary stopping intervals (note that in the stopping criterion \eqref{StopCond-02} we still compare with the average over the original interval $J$). 

For these preliminary stopping intervals we add their chidden $L\in \cA$ to the stopping intervals, and for the children $K\notin \cA$ we continue the precess (still comparing the averages with the average over the original interval $J$). 

We assumed that the collection $\cA$ is finite, so at some point the process will stop (no $I\in \cA$, $\widehat I\subset K$). We end up with the disjoint collection $\cG^*(J)=\cG\ci\cA^*(J)$ of stopping intervals. 

Since all the stopping intervals are inside the \emph{primary} preliminary stopping intervals, we can conclude that 
\begin{align}
\label{g-decay-02}
\sum_{I\in \cG^*(J)} \mu(I) \le \frac12 \mu(J). 
\end{align}
Denote  $G(J):= \bigcup_{I\in\cG^*(J)} I$. Denoting 
\begin{align*}
\cA(J):=\{I\in\cA: I\subset J\}, \qquad & \cA'(J):=\cA(J) \setminus\{J\}, 
\intertext{define}  
\cE(J) = \cE\ci\cA(J) := \cA(J)\setminus\bigcup_{I\in\cG^*(J)} \cA(I),\qquad & \cE'(J):= \cE(J)\setminus \{J\}.
\end{align*}
It easily follows from the construction  that for any $I\in \cE'(J)$
\begin{align}
\label{aver-est-01}
\La f \Ra\ci{\widehat I,\mu} \le 2 \La f\Ra\ci{J,\mu}. 
\end{align}

\subsection{Estimate of \texorpdfstring{$T_1$}{T<sub>1}}
\label{s:est-T1}
To estimate $T_1$ we run the stopping moments construction defined above in Section \ref{s:ModStop}.
We start with the collection $\cG_0$ of disjoint intervals covering the set $\bigcup_{I\in \cA}\widehat I$. Note that then $\cG_0\cap \cA=\varnothing$. 

For each $I\in \cG_0$ we run the stopping moments construction to get the collection $\cG^*(I)$; the union $\bigcup_{I\in\cG_0}\cG^*(I)$ give us the first generation of stopping moments $\cG_1^*$. Define inductively 
\[
\cG_{k+1}^* := \bigcup_{I\in\cG^*_k} \cG^*(I), 
\]  
and put $\cG:= \bigcup_{k\ge 1} \cG^*_k$. 

Note that the condition \eqref{g-decay-02} implies that the collection $\cG$ satisfies the following Carleson measure condition 
\begin{align}
\label{Carl-02}
|I_0|^{-1} \sum_{I\in \cG: I\subset I_0} \mu(I) \le 2 \qquad \forall I_0\in \cL; 
\end{align}
we also can replace $\cG$ by $\cG\cup\cG_0$ here, and still have the same estimate.

Since the collection $\cA$ is the disjoint union of the collections $\cE'(I)$, $I\in \cG\cup\cG_0$ and the collection $\cG$,  we can 
represent $T_1$ as $T_1=S_1+S_2$, where 

\[
S_1  =\sum_{J\in\cG\cup\cG_0} \int_J F\ci{\!J} g \,d\nu, 
\]
with
\begin{align}
\label{F_J}
F\ci J := \sum_{I\in\cE'(J)} \left(\int_{\widehat I} f d\mu\right) a\ci{\widehat I,I} \bl\ci I , 
\end{align}
and
\begin{align}
\label{S_2}
S_2 = \sum_{I\in\cG} a\ci{\widehat I, I} \left(\int_{\widehat I} f\,d\mu\right) \left(\int_{I} g\,d\nu\right). 
\end{align}
Note, that as defined, $F\ci{\!J}$ is supported by $J$.

Let us estimate $S_1$. The estimate \eqref{aver-est-01} of the average together with the testing condition \eqref{eq16} imply that 
\begin{align}
\label{norm-F_J} 
\|F\ci J\|\ci{L^p(\nu)} \le 2 \La f \Ra\ci{\!J, \mu}\mu(J)^{1/p}
\end{align}

Let us write 
\begin{align*}
S_1 &=\sum_{J\in\cG\cup\cG_0} \int_J F\ci{\!J} g \,d\nu 
\\& = \sum_{J\in\cG\cup\cG_0} \int_{J\setminus G(J)} F\ci{\!J} g \,d\nu +
\sum_{J\in\cG\cup\cG_0} \int_{G(J)} F\ci{\!J} g \,d\nu
\\& =: \sum_{J\in\cG\cup\cG_0} A(J) +  \sum_{J\in\cG\cup\cG_0} B(J) .
\end{align*}
The first sum is easy to estimate. By H\"{o}lder inequality and \eqref{norm-F_J}
\begin{align*}
|A(J)| & \le \|F\ci{\!J}\|\ci{L^p(\nu)} \left( \int_{J\setminus G(J)} |g|^{p'} d\nu\right)^{1/p'}
\\ & \le 2 \La f \Ra\ci{\!J, \mu}\mu(J)^{1/p} \left( \int_{J\setminus G(J)} |g|^{p'} d\nu\right)^{1/p'}
\end{align*}
Then using again H\"{o}lder inequality and then the fact that the sets $J\setminus G(J)$, $J\in \cG\cup \cG_0$ are disjoint we get 
\begin{align*}
\sum_{J\in \cG\cup \cG_0} A(J) &\le 2\left( \sum_{J\in \cG\cup \cG_0}  \La f\Ra\ci{\!J,\mu}^p \mu(J) \right)^{1/p} 
\left( \sum_{J\in \cG\cup \cG_0} \int_{J\setminus G(J)} |g|^{p'} d\nu\right)^{1/p'} \qquad \text{by H\"{o}lder},
\\
& \le 2\left( \sum_{J\in \cG\cup \cG_0}  \La f\Ra\ci{\!J,\mu}^p \mu(J) \right)^{1/p} \|g\|\ci{L^{p'}(\nu)} \qquad \text{sets } J\setminus G(J) \text{ are disjoint.}
\end{align*}

Since the collection $\cG\cup\cG_0$ satisfies the Carleson measure condition \eqref{Carl-02}, we can apply the Carleson Embedding Theorem (Theorem \ref{thm3}) to get 
\begin{align}
\label{f-embed}
\sum_{J\in \cG\cup \cG_0}  \La f\Ra\ci{\!J,\mu}^p \mu(J) \le 2 (p')^p  \|f\|\ci{L^p(\mu)}^p,  
\end{align}
so
\[
\sum_{J\in \cG\cup \cG_0} A(J) \le 2^{1+1/p} p' \|f\|\ci{L^p(\mu)}  \|g\|\ci{L^{p'}(\nu)}. 
\]

To estimate $\sum B(J)$ we notice, that since the functions $F\ci{\!J}$ are constant on each interval $I\in \cG^*(J)$, 
\[
\int_{G(J)} F\ci{\!J} g \,d\nu = \int_{G(J)} F\ci{\!J} g\ci{\!J} \,d\nu, 
\]
where
\[
g\ci{\!J} := \sum_{I\in\cG^*(J)} \La g \Ra\ci{I,\nu} \bl\ci I
\]
Then by H\"{o}lder inequality 
\begin{align*}
B(J) &\le \|F\ci{\!J}\|\ci{L^p(\nu)} \| g\ci{\!J} \|\ci{L^{p'}(\nu)} 
\\ &
\le 2 \La f \Ra\ci{\!J, \mu}\mu(J)^{1/p} \left(\sum_{I\in\cG^*(J)} \La g\Ra\ci{I, \nu}^{p'} \nu(I) \right)^{1/p'}.
\end{align*}
The stopping intervals $I\in\cG^*(J)$ belong to $\cA$, so by \eqref{eq18}
\[
\La g\Ra\ci{I, \nu}^{p'} \nu(I) \le \La f \Ra\ci{I, \mu}^p \mu(I),  
\]
and therefore 
\begin{align*}
B(J) \le 2 \La f \Ra\ci{\!J, \mu}\mu(J)^{1/p} \left(\sum_{I\in\cG^*(J)} \La f \Ra\ci{I, \mu}^p \mu(I) \right)^{1/p'}. 
\end{align*}
Summing and applying H\"{o}oder inequality we get 
\begin{align*}
\sum_{J\in \cG\cup\cG_0} B(J) &\le 2 \left( \sum_{J\in \cG\cup \cG_0}  \La f\Ra\ci{\!J,\mu}^p \mu(J) \right)^{1/p} \left( \sum_{J\in \cG}  \La f\Ra\ci{I,\mu}^p \mu(I) \right)^{1/p'}
\\  &
\le 2  \sum_{J\in \cG\cup \cG_0}  \La f\Ra\ci{\!J,\mu}^p \mu(J)
\\ & 
\le 4 (p')^p \|f\|\ci{L^{p}(\mu)}^p \qquad \text{by \eqref{f-embed} }
\end{align*}
Recall that we assumed that $\|f\|\ci{L^p(\mu)} = \|g\|\ci{L^{p'}(\nu)} = 1$, so 
\[
\sum_{J\in \cG\cup\cG_0} B(J)  \le 4 (p')^p \|f\|\ci{L^{p}(\mu)} \|g\|\ci{L^{p'}(\nu)},  
\]
and thus 
\[
S_1 \le (4 (p')^p+ 2^{1+1/p}p') \|f\|\ci{L^{p}(\mu)} \|g\|\ci{L^{p'}(\nu)}. 
\]

To estimate $S_2$,  let us write recalling the splitting condition \eqref{eq18}
\begin{align*}
S_2 & = \sum_{I\in\cG} a\ci{\widehat I, I} \La f \Ra\ci{\widehat I, \mu } \mu(\widehat I) \La g \Ra\ci{I,\nu} \nu(I) 
\\
& \le \left(\sum_{I\in\cG}  \La f \Ra\ci{\widehat I, \mu }^p a\ci{\widehat I, I}^p \mu(\widehat I)^p \nu(I) \right)^{1/p} \left(\sum_{I\in\cG}\La g \Ra\ci{I,\nu}^{p'} \nu(I) \right)^{1/p'} \qquad \text{H\"{o}lder inequality}
\\ &\le 
\left(\sum_{I\in\cG}  \La f \Ra\ci{\widehat I, \mu }^p a\ci{\widehat I, I}^p \mu(\widehat I)^p \nu(I)\right)^{1/p} 
\left(\sum_{I\in\cG}\La f \Ra\ci{I,\mu}^{p} \mu(I) \right)^{1/p'} \qquad \text{by \eqref{eq18}}
\end{align*}
The second term is easy to estimate: the Carleson measure condition \eqref{Carl-02} and the Carleson Embedding Theorem (Theorem \ref{thm3}) imply that 
\[
\sum_{I\in\cG}\La f \Ra\ci{I,\mu}^{p} \mu(I) \le 2 (p')^p \|f\|\ci{L^p(\mu)}^p. 
\]

To estimate the first term, we apply Lemma \ref{l:CarlCond} below.
\begin{lemma}
\label{l:CarlCond}
The sequence $\{\alpha\ci I\}\ci{I\in\cL}$, 
\[
\alpha\ci I := \sum_{I'\in\child(I)}  a\ci{I, I'}^p \mu(I)^p \nu(I')
\]
satisfies the Carleson measure condition 
\begin{align}
\label{mu-carl-03}
\sum_{I\in \cL: I\subset I_0} \alpha\ci I \le \mu(I_0) \qquad \forall I_0\in\cL. 
\end{align}
\end{lemma}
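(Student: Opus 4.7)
\textbf{Proof proposal for Lemma \ref{l:CarlCond}.} The plan is to recognize that $\alpha_I$ can be rewritten as an integral against $\nu$ of a power of $a_I$, and then pointwise dominate the sum of $p$-th powers by the $p$-th power of the sum, using that all quantities involved are non-negative. The point is that condition \eqref{eq16}, together with the identity $\mu(I)^p \int_I a_I^p \, d\nu = \alpha_I$, essentially does all the work; condition \eqref{eq17} is not needed for this lemma.

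First I would rewrite $\alpha_I$ using the definition of $a_I = \sum_{I'\in\ch(I)} a_{I,I'} \mathbf{1}\ci{I'}$: since the children $I'$ are disjoint and contained in $I$,
\[
\int_I a_I^p \, d\nu = \sum_{I'\in\ch(I)} a_{I,I'}^p \, \nu(I'),
\]
so $\alpha_I = \mu(I)^p \int_I a_I^p \, d\nu$. Since $a_I$ is supported on $I\subseteq I_0$, we may replace the domain of integration by $I_0$ and swap sum and integral:
\[
\sum_{I\in\cL:\, I\subset I_0} \alpha_I \;=\; \int_{I_0} \sum_{I\in\cL(I_0)} \mu(I)^p\, a_I(x)^p \, d\nu(x).
\]

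Next, fix $x\in I_0$ and look at the inner sum. Because $a_I\ge 0$ and $p\ge 1$, the elementary inequality $\sum_k t_k^p \le (\sum_k t_k)^p$ for non-negative reals $t_k$ (which follows from $t_k^p \le t_k (\sum_j t_j)^{p-1}$) yields the pointwise bound
\[
\sum_{I\in\cL(I_0)} \mu(I)^p\, a_I(x)^p \;\le\; \Biggl( \sum_{I\in\cL(I_0)} \mu(I)\, a_I(x) \Biggr)^{p}.
\]
Integrating this against $d\nu$ on $I_0$ and applying the direct testing condition \eqref{eq16} with $B=1$ (allowed by the rescaling made just before the proof), we obtain
\[
\sum_{I\in\cL:\, I\subset I_0} \alpha_I \;\le\; \int_{I_0}\Biggl( \sum_{I\in\cL(I_0)} \mu(I)\, a_I \Biggr)^{p} d\nu \;\le\; \mu(I_0),
\]
which is exactly \eqref{mu-carl-03}.

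There is no real obstacle here: the only two ingredients are the pointwise $\ell^p$--$\ell^1$ inequality (for which positivity of $a\ci{I,I'}$ is essential, and is given) and the direct testing hypothesis. The slightly non-obvious step is recognizing that $\mu(I)^p\int a_I^p\,d\nu$ is the right quantity to match $\alpha_I$; once this is observed, the lemma drops out in a couple of lines.
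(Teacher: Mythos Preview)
Your proof is correct and is essentially identical to the paper's: both rewrite $\alpha_I$ as $\int_{I_0}\mu(I)^p a_I^p\,d\nu$, apply the pointwise inequality $\|x\|_{\ell^p}\le\|x\|_{\ell^1}$ to pass from $\sum_I \mu(I)^p a_I^p$ to $(\sum_I \mu(I) a_I)^p$, and then invoke the testing condition \eqref{eq16} with $B=1$. The only cosmetic difference is that the paper writes everything with the indicators $a_{I,I'}\mathbf{1}_{I'}$ rather than the shorthand $a_I$.
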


Using the lemma we can estimate 
\begin{align}
\label{est-S2}
\sum_{I\in\cG}  \La f \Ra\ci{\widehat I, \mu }^p a\ci{\widehat I, I}^p \mu(\widehat I)^p \nu(I) & \le 
\sum_{I\in \cL} \La f \Ra\ci I^p \alpha\ci I &&\text{sum over a bigger set}
\\ 
\notag
& \le 
(p')^p \|f\|\ci{L^p(\mu)}^p &&\text{by Theorem \ref{thm3}}. 
\end{align}
Gathering the above estimates, we get the desired estimate of $S_2$ and therefore of $T_1$. \hfill\qed

\begin{proof}[Proof of Lemma \ref{l:CarlCond}]
For $I_0\in\cL$
\begin{align*}
\sum_{I\in\cL(I_0)} \alpha\ci I & = \int_{I_0} \left( \sum_{I\in\cL(I_0)}\sum_{I'\in\child(I)} 
a\ci{I, I'}^p \mu(I)^p \bl\ci{I'}\right) \, d\nu \\
& \le 
\int_{I_0} \left( \sum_{I\in\cL(I_0)}\sum_{I'\in\child(I)} 
a\ci{I, I'} \mu(I) \bl\ci{I'}\right)^p \,d\nu &&\text{because $\|x\|\ci{\ell^p}\le \|x\|\ci{\ell^1}$}
\\ &
\le \mu(I_0) && \text{by assumption \eqref{eq16}}.
\end{align*}
\end{proof}

\subsection{Estimate of \texorpdfstring{$T_2$}{T<sub>2}} The estimate of $T_2$ is simpler, because it relies on  a simpler stopping moment construction. 
 
Namely,  we run the stopping intervals construction described in Section \ref{s2}  with $\cF=\cB$ and with respect to the measure $\nu$ and the function $g$. 

We start with a collection $\cG_0$ of disjoint intervals covering the set $\bigcup_{I\in\cB} I$, and run the construction starting from these intervals. We get the collection $\cG$ of stopping intervals satisfying the Carleson measure condition
\begin{align}
\label{Carl-nu}
\sum_{I\in \cG\cup \cG_0, \,I\subset I_0} \nu(I) \le 2 \nu(I_0) \qquad \forall I_0\in \cL. 
\end{align}

Again, define $G(J):=\bigcup_{I\in\cG^*(J)}I$. Denoting
\[
\cB(J):=\{I\in \cB:I\subset \cB\}, \qquad \cB'(J):=\{I\in \cB:I\subsetneqq \cB\} =\cB(J)\setminus\{J\}, 
\]
define 
\[
\cE(J)=\cE\ci\cB(J) := \cB(J)\setminus \bigcup_{I\subset \cG^*(J)} \cB(J), \qquad
\cE'(J)=\cE'\ci\cB(J) = \cE(J)\setminus\{J\}. 
\]

Similarly to the case of $T_1$ we split the sum into 2 parts, $T_2= S_1+ S_2$, where $S_2$ is the sum over $\cG$ and $S_1$ is the rest, 
\begin{align*}
T_2= \sum_{I\in \cB:I\notin \cG} a\ci{\widehat I, I} \left(\int_{\widehat I} f \,d\mu\right) \left(\int_I g\,d\nu \right) + \sum_{I\in\cG} a\ci{\widehat I, I} \left(\int_{\widehat I} f \,d\mu\right) \left(\int_I g\,d\nu \right) =: S_1+S_2. 
\end{align*}
Denoting for $J\in \cG\cup\cG_0$  
\[
F\ci{\!J} := \sum_{I\in\cE'(J)} a\ci{\widehat I, I} \left(\int_I g\,d\nu \right) \bl\ci{\widehat I}
\]
and noticing that $F\ci{\!J}$ is supported on $J$ we can write
\[
S_1=\sum_{J\in\cG\cup\cG_0} \int_J F\ci{\!J} f \, d\mu. 
\]
Using the estimate \eqref{Carl-nu} and the testing condition \eqref{eq17} we can write 
\begin{align}
\label{norm-G_J} 
\|F\ci J\|\ci{L^{p'}(\mu)} \le 2 \La g \Ra\ci{\!J, \nu}\nu(J)^{1/p'}
\end{align}

We then  decompose $S_1$ as 
\begin{align*}
S_1 &=\sum_{J\in\cG\cup\cG_0} \int_J F\ci{\!J} f \,d\mu 
\\& = \sum_{J\in\cG\cup\cG_0} \int_{J\setminus G(J)} F\ci{\!J} f \,d\mu +
\sum_{J\in\cG\cup\cG_0} \int_{G(J)} F\ci{\!J} f \,d\mu
\\& =: \sum_{J\in\cG\cup\cG_0} A(J) +  \sum_{J\in\cG\cup\cG_0} B(J) .
\end{align*}
The first sum is easy to estimate. By H\"{o}lder inequality and \eqref{norm-G_J}
\begin{align*}
|A(J)| & \le \|F\ci{\!J}\|\ci{L^{p'}(\mu)} \left( \int_{J\setminus G(J)} |f|^{p} d\mu\right)^{1/p}
\\ & \le 2 \La g \Ra\ci{\!J, \nu}\nu(J)^{1/p'} \left( \int_{J\setminus G(J)} |f|^{p} d\mu\right)^{1/p}
\end{align*}
Then using again H\"{o}lder inequality and then the fact that the sets $J\setminus G(J)$, $J\in \cG\cup \cG_0$ are disjoint we get 
\begin{align*}
\sum_{J\in \cG\cup \cG_0} A(J) &\le 2\left( \sum_{J\in \cG\cup \cG_0}  \La g\Ra\ci{\!J,\nu}^{p'} \nu(J) \right)^{1/p'} 
\left( \sum_{J\in \cG\cup \cG_0} \int_{J\setminus G(J)} |f|^{p} d\mu\right)^{1/p} \qquad \text{by H\"{o}lder},
\\
& \le 2\left( \sum_{J\in \cG\cup \cG_0}  \La g\Ra\ci{\!J,\nu}^{p'} \nu(J) \right)^{1/p'} \|f\|\ci{L^{p}(\mu)} \qquad \text{sets } J\setminus G(J) \text{ are disjoint.}
\end{align*}

Since the collection $\cG\cup\cG_0$ satisfies the Carleson measure condition \eqref{Carl-nu}, we can apply the Carleson Embedding Theorem (Theorem \ref{thm3}) to get 
\begin{align}
\label{g-embed}
\sum_{J\in \cG\cup \cG_0}  \La g\Ra\ci{\!J,\nu}^{p'} \nu(J) \le 2 p^{p'}  \|g\|\ci{L^{p'}(\nu)}^{p'},  
\end{align}
so
\[
\sum_{J\in \cG\cup \cG_0} A(J) \le 2^{1+1/p'} p \|f\|\ci{L^p(\mu)}  \|g\|\ci{L^{p'}(\nu)}. 
\]

To estimate $\sum B(J)$ we notice, that since the functions $F\ci{\!J}$ are constant on each interval $I\in \cG^*(J)$, 
\[
\int_{G(J)} F\ci{\!J} f \,d\mu = \int_{G(J)} F\ci{\!J} f\ci{\!J} \,d\mu, 
\]
where
\[
f\ci{\!J} := \sum_{I\in\cG^*(J)} \La f \Ra\ci{I,\mu} \bl\ci I
\]
Then by H\"{o}lder inequality 
\begin{align*}
B(J) &\le \|F\ci{\!J}\|\ci{L^{p'}(\mu)} \| f\ci{\!J} \|\ci{L^{p}(\mu)} 
\\ &
\le 2 \La g \Ra\ci{\!J, \nu}\nu(J)^{1/p'} \left(\sum_{I\in\cG^*(J)} \La f\Ra\ci{I, \mu}^{p} \mu(I) \right)^{1/p}.
\end{align*}
The stopping intervals $I\in\cG^*(J)$ belong to $\cB$, so by \eqref{eq18}
\[
\La f \Ra\ci{I, \mu}^p \mu(I) < \La g\Ra\ci{I, \nu}^{p'} \nu(I) \,  
\]
and therefore 
\begin{align*}
B(J) \le 2 \La g \Ra\ci{\!J, \nu}\nu(J)^{1/p'} \left(\sum_{I\in\cG^*(J)} \La g \Ra\ci{I, \nu}^{p'} \nu(I) \right)^{1/p}. 
\end{align*}
Summing and applying H\"{o}oder inequality we get 
\begin{align*}
\sum_{J\in \cG\cup\cG_0} B(J) &\le 2 \left( \sum_{J\in \cG\cup \cG_0}  \La g\Ra\ci{\!J,\nu}^{p'} \nu(J) \right)^{1/p'} \left( \sum_{J\in \cG}  \La g\Ra\ci{I,\nu}^{p'} \nu(I) \right)^{1/p}
\\  &
\le 2  \sum_{J\in \cG\cup \cG_0}  \La g\Ra\ci{\!J,\nu}^{p'} \nu(J)
\\ & 
\le 4 p^{p'} \|g\|\ci{L^{p'}(\nu)}^{p'} \qquad \text{by \eqref{g-embed} }
\end{align*}
Recall that we assumed that $\|f\|\ci{L^p(\mu)} = \|g\|\ci{L^{p'}(\nu)} = 1$, so 
\[
\sum_{J\in \cG\cup\cG_0} B(J)  \le 4 p^{p'} \|f\|\ci{L^{p}(\mu)} \|g\|\ci{L^{p'}(\nu)},  
\]
and thus 
\[
S_1 \le (4 p^{p'}+ 2^{1+1/p'}p) \|f\|\ci{L^{p}(\mu)} \|g\|\ci{L^{p'}(\nu)}. 
\]

To estimate $S_2$,  let us write using H\"{o}lder inequality 
\begin{align*}
S_2 & = \sum_{I\in\cG} a\ci{\widehat I, I} \La f \Ra\ci{\widehat I, \mu } \mu(\widehat I) \La g \Ra\ci{I,\nu} \nu(I) 
\\
& \le \left(\sum_{I\in\cG}  \La f \Ra\ci{\widehat I, \mu }^p a\ci{\widehat I, I}^p \mu(\widehat I)^p \nu(I) \right)^{1/p} \left(\sum_{I\in\cG}\La g \Ra\ci{I,\nu}^{p'} \nu(I) \right)^{1/p'} 
\end{align*}
The second term was already estimated in \eqref{g-embed}. The first term is  estimated using Lemma \ref{l:CarlCond}, exactly as it was done in the end of Section \ref{s:est-T1}, see \eqref{est-S2}.

\end{document}